\documentclass[12pt,twoside]{article}

\usepackage{graphicx,graphics, amsthm, amsfonts, amsbsy,amssymb, amsmath, cite, array,arydshln, hyperref, float,tikz,booktabs}

\usetikzlibrary{positioning}

\everymath{\displaystyle}

\usepackage[margin=1in]{geometry}
\renewenvironment{proof}{{\noindent\bfseries Proof.}}{\qed}
\allowdisplaybreaks

\let\OLDthebibliography\thebibliography
\renewcommand\thebibliography[1]{
	\OLDthebibliography{#1}
	\setlength{\parskip}{4pt}
	\setlength{\itemsep}{1pt plus 0.9ex}
}

\newtheorem{theorem}{Theorem}[section]

\newtheorem{example}[theorem]{Example}

\newtheorem{lemma}[theorem]{Lemma}
\newtheorem{corollary}[theorem]{Corollary}

\def\qed{\nolinebreak\hfill\rule{.2cm}{.2cm}\par\addvspace{.5cm}}

\begin{document}
	\title{Spectral Properties of Zero-Divisor Graphs of Truncated Polynomial Rings}
	\author{{\small Bilal Ahmad Rather}\\[2mm]
		{\small School of Mathematics and Statistics, Shandong University of Technology, Zibo 255049, China}\\
		\small \texttt{bilahamadrr@gmail.com}
					}
	\date{}
	
	\pagestyle{myheadings} \markboth{Bilal Ahmad Rather}{Spectral Properties of Zero-Divisor Graphs of Truncated Polynomial Rings}
	\maketitle
	\begin{abstract}
		Let $R$ be a commutative ring with identity and let $Z^{\ast}(R)$ denote the set of nonzero zero-divisors of $R$.
		The \emph{zero-divisor graph} $ \varGamma(R)$ is the simple graph with vertex set $V( \varGamma(R))=Z^{\ast}(R)$, where two distinct vertices$x,y\in Z^{\ast}(R)$ are adjacent if and only if $xy=0$ in $R$. In this paper we investigate the zero-divisor graph of the truncated polynomial ring $R=\mathbb{Z}_{p}[x]/\langle x^{c}\rangle,$ for $c\in\mathbb{N}.$ We determine the spectrum of the $A_{\alpha}$-matrix associated with $ \varGamma(R)$, and, as special cases, explicitly obtain both the adjacency spectrum and the signless Laplacian spectrum of $ \varGamma(R)$. Furthermore, we prove that the Laplacian eigenvalues, as well as the distance eigenvalues, of these graphs are all integers.
	\end{abstract}
	
	\noindent{\footnotesize Keywords:  $A_{\alpha}$ Adjacency matrix; Commutative Rings; zero-divisor graphs;  Laplacian integral graphs.}
	
	\vskip 3mm
	\noindent {\footnotesize AMS subject classification: 05C50, 05C25, 15A18, 13A70.}
	
	\section{Introduction}
	
	A (finite, simple) graph $G=(V(G),E(G))$ consists of a finite vertex set $V(G)$ and an edge set
	$E(G)\subseteq \{\{u,v\}:u,v\in V(G), u\neq v\}$. The \emph{order} and \emph{size} of $G$ are
	$n=|V(G)|$ and $m=|E(G)|$, respectively. For vertices $u,v\in V(G)$, we write $u\sim v,$ if
	$\{u,v\}\in E(G)$. The \emph{open neighbourhood} of a vertex $v$ is $N_G(v) = \{u\in V(G):u\sim v\},$
	and the \emph{degree} of $v$ is $d_G(v)=|N_G(v)|$ (or simply $d(v)$ if the graph is clear).
	An \emph{independent set} in $G$ is a subset of vertices no two of which are adjacent. A
	\emph{clique} is a vertex set $X\subseteq V(G)$ such that the induced subgraph $G[X]$ is
	complete. A vertex is universal if it is adjacent to all other vertices of $G$. The complete graph on $n$ vertices is denoted by $K_n$. For other notations, see \cite{cds}.
	
	The \emph{adjacency matrix} of $G$ is the $n\times n$ matrix $A(G)=(a_{ij}),$ where
	\[
	a_{ij} =
	\begin{cases}
		1, & \text{if } v_i\sim v_j,\\[2pt]
		0, & \text{otherwise}.
	\end{cases}
	\]
	The matrix $A(G)$ is real symmetric, hence all its eigenvalues are real. We list them in
	non-increasing order as $\lambda_1(G)\geq \lambda_2(G)\geq \dots \geq \lambda_n(G).$
	The multiset $\{\lambda_1(G),\dots,\lambda_n(G)\}$ is called the \emph{adjacency spectrum} (or spectra) of $G$.
	The \emph{characteristic polynomial} of a real matrix $M$ is $\Phi(M,\lambda) = \det(\lambda I - M),$
	where $I$ is the identity matrix. The \emph{energy} of a graph $G$, introduced by Gutman
	in the context of theoretical chemistry, is defined by
	\[
	\mathcal{E}(G) = \sum_{i=1}^n |\lambda_i(G)|.
	\]
	The energy of $G$ has been extensively studied, see
	\cite{gutman2010,filipovski,shi,nv} and the references therein. Let $D(G)$ be the diagonal matrix of vertex degrees, that is, $D(G) = \operatorname{diag}(d(v_1),\dots,d(v_n)).$ 	The \emph{Laplacian matrix} of $G$ is defined as $	L(G) = D(G)-A(G),$	while the \emph{signless Laplacian} is defined by $Q(G) = D(G)+A(G).$
	Both $L(G)$ and $Q(G)$ are real symmetric and positive semidefinite (definite) matrices. Their spectra
	encode a number of combinatorial properties of the underlying graph, see
	\cite{cds,brouwer-haemers}.
	
	Motivated by unifying the adjacency and signless Laplacian theories, Nikiforov	\cite{nikiforov-aalpha}, introduced the one–parameter family of matrices
	\[
	A_{\alpha}(G) = \alpha D(G) + (1-\alpha)A(G),\qquad 0\leq \alpha\leq 1.
	\]
	For some special values of $\alpha$, $A_\alpha(G)$ matrix reduces to classical matrices, like
	\[
	A_0(G) = A(G),\qquad
	A_1(G) = D(G),\qquad
	2A_{1/2}(G) = D(G)+A(G)=Q(G).
	\]
	Moreover, for any distinct $\alpha,\beta\in[0,1]$ one has
	\[
	\frac{1}{\alpha-\beta}\bigl(A_\alpha(G)-A_\beta(G)\bigr)
	= D(G)-A(G)=L(G).
	\]
	So, the adjacency, the Laplacian and the  Laplacian matrices can  be recovered from the $A_\alpha$–family. Nikiforov \cite{nikiforov-aalpha}, provided basic properties of $A_\alpha$ matrix, like positive definiteness, inequalities for the spectral radius (the largest eigenvalue), and spectra of various graph families. The $A_{\alpha}$ spectra of power graphs, energy and other properties are given in \cite{bilalaims,bilalakcej,bilaljaa}. Nikiforov et. al. \cite{NikiforovTrees} presented $A_{\alpha}$ spectra of trees, Alhevaz \cite{AAlphaBounds} obtained the inequalities for the spectral radius of the $A_{\alpha}$ matrix, and Liu, Xue and Shu \cite{LinXueShu2018} found results on $A_\alpha$ spectral radius and related invariants.

	The \emph{distance matrix} of a connected graph $G$ is the $n\times n$ matrix $D_{\mathrm{dist}}(G)$
	whose $(i,j)$–entry is the graph distance $d_G(v_i,v_j)$. Let
	\[
	\operatorname{Tr}(G) = \operatorname{diag}(t(v_1),\dots,t(v_n)),\qquad
	t(v_i) = \sum_{j=1}^n d_G(v_i,v_j),
	\]
	be the diagonal matrix of vertex transmissions. Following Aouchiche and Hansen
	\cite{aouchiche-hansen-distance}, the \emph{distance Laplacian} of $G$ is defined by $\mathcal{L}(G) = \operatorname{Tr}(G) - D_{\mathrm{dist}}(G).$
	The spectral properties of $\mathcal{L}(G)$ form an analogue of Laplacian spectral theory
	for distance–based matrices and have attracted considerable attention in recent years
	\cite{aouchiche-hansen-distance,Ah2013}. A detailed survey can be seen in \cite{AH3,bilalDlsurvey}.
	
	A real square matrix $M$ is called \emph{integral} if all its eigenvalues are integers. A graph
	$G$ is said to be \emph{integral} (with respect to a given graph matrix) if the eigenvalues of
	that matrix are integers. Integral spectra arise naturally in several contexts: they appear in
	the study of combinatorial designs and strongly regular graphs, and they are related to algebraic
	graph theory and number–theoretic questions, see, for example,
	\cite{cds}. Harary and Schwenk first studied the problem of the integral graphs \cite{harary-schwenk}. Klotz and Sander \cite{klotz-sander} presented properties of integral graphs. Some circulant integral graphs can be seen in \cite{so-integral}.   Integral Laplacian or
	distance Laplacian spectra are particularly attractive, since they combine structural graph
	information with arithmetic constraints and often lead to strong classification results. If a graph has an eigenvalue $\lambda$ with multiplicity $k$, then we represent it by $\lambda^{[k]}$. In the present work, we are interested in graphs for which various matrices are integral, and we determine their spectra explicitly for a family of
	zero-divisor graphs.

	Let $R$ be a commutative ring with identity $1\neq 0$. A nonzero element $x\in R$ is a
	\emph{zero-divisor}, if there exists a nonzero $y\in R$ such that $xy=0$. Beck
	\cite{ib} introduced the notion of the \emph{zero–divisor graph} of $R$, including $0$ as a
	vertex. Later, Anderson and Livingston \cite{al} modified the definition by considering only
	nonzero zero-divisors as vertices and joining distinct vertices $x,y$ whenever $xy=0$. Unless
	stated otherwise, we adopt the latter convention here. The adjacency spectrum of zero–divisor graphs was first considered by Young \cite{my}, and has since been studied in a variety of settings, see, for instance,
	\cite{bilalijpam,monius,fareeha,bilalcoo,bilalejgta,johnson} and the references therein.
	Beyond adjacency spectra, Laplacian and signless Laplacian spectra, as well as other spectral
	invariants and topological indices of zero–divisor graphs, have also been investigated. 
	
	The paper is organized as: Section \ref{section 2} gives the $A_{\alpha}$ spectra of zero-divisor graph $\mathbb{Z}_p[x]/\langle x^{2b}\rangle $ where $b\in \mathbb{N}$ is a positive integer. In Section \ref{section 3}, we give the structure properties of $\mathbb{Z}_p[x]/\langle x^{2b+1}\rangle,$ and  present its $A_{\alpha}$ spectra. We end up the article with comments showing that $ \varGamma(\mathbb{Z}_p[x]/\langle x^{c}\rangle), $ with $c\in \mathbb{N}$ is distance Laplacian integral.

	\section{Spectra of zero-divisor graph of $\mathbb{Z}_p[x]/\langle x^{2b}\rangle $}\label{section 2}
	Let $p$ be a prime and let $c\in \mathbb{N}$ be a positive integer. We consider the quotient ring $R = \mathbb{Z}_p[x]/\langle x^{c}\rangle.$	The ring $R$ is a finite commutative local ring with maximal ideal $\mathfrak{m}=\langle x\rangle$ and residue field $R/\mathfrak{m}\cong \mathbb{Z}_p$. As a $\mathbb{Z}_p$-vector space, $R$ has basis $\{1,x,\dots,x^{c-1}\}$, and hence $|R| = p^{c}$. Moreover, $\mathfrak{m}$ is nilpotent with $\mathfrak{m}^{c}=\langle x^{c}\rangle=0$. So, $R$ is an Artinian principal ideal ring of length $c$ which is not reduced. The following result gives the structural properties of the zero-divisor graph of ring  $\mathbb{Z}_p[x]/\langle x^{2b}\rangle$.
	\begin{theorem}\label{even case}
		Let $p$ be a prime and let $a=2b$, where $b\in \mathbb{N}$. Let $R = \mathbb{Z}_p[x]/\langle x^{2b}\rangle\cong\mathbb{F}_p[x]/(x^{2b}),$ and  $ \varGamma(R)$ be its zero-divisor graph. Then the following hold.
		\begin{enumerate} 
			\item The order of $ \varGamma(R)$ is $|V( \varGamma(R))|= |Z(R)\setminus\{0\}|= p^{ 2b-1} - 1$, and its vertex set can be written as $V( \varGamma(R))=L\cup U,$ where $L=\bigcup_{k=1}^{b-1} V_k, ~U = \bigcup_{k=b}^{2b-1} V_k,$ and $|V_i|= (p-1)p^{ 2b-1-i},  $ for $ i=1,\dots,2b-1.$
			\item In $ \varGamma(R)$, $C=\bigcup_{k=b}^{2b-1} V_k$ is a clique of size $p^{b} - 1$, and $L=\bigcup_{k=1}^{b-1}V_k$ is an independent set of size $p^{2b-1}-p^{b}$.
			\item For $v\in V_i$, its neighbors are exactly the vertices in $\bigcup_{j: i+j\ge 2b} V_j$, and if $i\ge b$, then $v$ is also adjacent to all other vertices in $V_i$.
			\item The diameter of $ \varGamma(R)$ is $2$, and its girth is $3.$
		\end{enumerate}
	\end{theorem}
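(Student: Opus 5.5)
The approach is to stratify $R$ by $x$-adic valuation. Every nonzero $f\in R$ can be written uniquely as $f=x^{i}u$ with $0\le i\le 2b-1$ and $u$ a unit. Here $i=\nu(f)$ is the least exponent with a nonzero coefficient, and $u$ is determined modulo $x^{2b-i}$.

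Set $V_i=\{f\in R: \nu(f)=i\}$ for $1\le i\le 2b-1$. Such an $f$ has the form $a_ix^i+\dots+a_{2b-1}x^{2b-1}$ with $a_i\ne 0$. Counting choices gives $|V_i|=(p-1)p^{2b-1-i}$. Elements with $i=0$ are units, since their constant term is nonzero in a local ring. Every $f$ with $i\ge 1$ is a zero-divisor because $f\cdot x^{2b-1}=0$. Hence $Z^\ast(R)=\bigcup_{i=1}^{2b-1}V_i=\mathfrak m\setminus\{0\}$, which has $p^{2b-1}-1$ elements; the same count follows by summing the geometric series. This gives part (1).

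The key lemma is the adjacency criterion: for $f\in V_i$ and $g\in V_j$,
\[
fg=x^{i+j}uv, \qquad\text{so}\qquad fg=0 \iff i+j\ge 2b,
\]
because $uv$ is a unit and $x^{i+j}\ne 0$ exactly when $i+j<2b$. Part (3) follows immediately. In particular, two distinct vertices in the same $V_i$ are adjacent iff $2i\ge 2b$, i.e.\ iff $i\ge b$.

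For part (2), any two vertices of $C=\bigcup_{k\ge b}V_k$ have $i+j\ge 2b$, so $C$ is a clique. Its size is $|\mathfrak m^{b}\setminus\{0\}|=p^{b}-1$, again by the geometric sum. Any two vertices of $L$ have $i,j\le b-1$, hence $i+j\le 2b-2<2b$, so $L$ is independent. Its size is $(p^{2b-1}-1)-(p^b-1)$.

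For part (4), every vertex of $V_{2b-1}$ is universal, since $i+(2b-1)\ge 2b$ for all $i\ge1$. Thus any two nonadjacent vertices have a common neighbour in $V_{2b-1}$, and $\operatorname{diam}\le 2$. For $b\ge2$, any two vertices of $V_1$ are nonadjacent, so the diameter is exactly $2$. For girth $3$, I take two vertices of $V_{2b-1}$ together with one further vertex; this needs $|V_{2b-1}|=p-1\ge2$ and order at least $3$.

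The main obstacle is these degenerate cases, not the algebra. When $b=1$ the graph is $K_{p-1}$, whose diameter is $1$ (or $0$), and when $p=2$ the universal set $V_{2b-1}$ is a single vertex. For $p=2$ I would instead look for a triangle using $V_{2b-1}$ and two vertices of $V_{b}\cup\dots\cup V_{2b-2}$, which requires $b\ge 2$ and $p^b-1\ge3$. I would state these small exceptions explicitly and prove (4) under the implicit assumption $b\ge 2$, noting that the $p=2$, $b=1$ and general $b=1$ cases must be excluded or treated separately.
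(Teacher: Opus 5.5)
Your proposal is correct and is essentially the paper's argument: you partition by minimal degree, obtain $fg=0\iff i+j\ge 2b$ from the leading term (your factorization $fg=x^{i+j}uv$ with $uv$ a unit is the same observation), read off (1)--(3), and use the universal vertices of $V_{2b-1}$ for the diameter, and the paper likewise concedes that $b=1$ gives $K_{p-1}$, so (4) genuinely needs $b\ge 2$. The only cosmetic difference is the girth step, where the paper avoids your case split on $p$ by taking the clique $V_{2b-2}$ (of size $(p-1)p\ge 2$) together with a vertex of $V_{2b-1}$; equivalently, for $b\ge 2$ the clique $C$ already has $p^b-1\ge 3$ vertices.
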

	\begin{proof}
		With the description of $R$ as an $\mathbb{F}_p$--vector space, the characterization of
		units by the constant term, and the identification of nonzero zero-divisors with the
		nonzero elements of $(x)$ are standard for $\mathbb{F}_p[x]/(x^{2b})$. The $\mathbb{F}_p$--vector space with multiplication modulo $x^{2b}$ is
		\[
		R = \Bigl\{\sum_{k=0}^{2b-1} a_k x^k : a_k \in \mathbb{F}_p\Bigr\}.
		\]
		 So, all terms of degree greater or equal to $2b$ vanish. An element $\sum_{k=0}^{2b-1} a_k x^k$ is a unit if and only if $a_0\neq 0$.
		Since $R$ is finite, every nonunit is a zero-divisor, denoted by $Z(R)$. Thus, the set of nonzero zero-divisors is
		\[
		Z(R)\setminus\{0\}
		= \Bigl\{\sum_{k=1}^{2b-1} a_k x^k : (a_1,\dots,a_{2b-1}) \neq (0,\dots,0)\Bigr\},
		\]
		which are precisely the the nonzero elements of the ideal $(x)$, and hence we have
		\[
		|V( \varGamma(R))|
		= |Z(R)\setminus\{0\}|
		= p^{ 2b-1} - 1.
		\]
		For (1), every nonzero element $f$ of $(x)$ has a smallest degree $k$ with nonzero coefficient,
		denoted by $\operatorname{mindeg}(f)$. For a nonzero zero-divisor $f \in Z(R)$, define the minimal degree
		\[
		\operatorname{mindeg}(f)
		= \min\bigl\{k \in \{1,\dots,2b-1\} : \text{the coefficient of $x^k$ in $f$ is nonzero}\bigr\}.
		\]
		Fixing $a_i\in\mathbb{F}_p^\times$ and allowing the
		higher coefficients to vary freely gives cardinalities of the sets $V_i$.  This partitions the vertex set $V( \varGamma(R)) = Z(R)\setminus\{0\}$ into
		\[
		V( \varGamma(R)) = V_1 \cup V_2 \cup \dots\cup V_{b-1}\cup V_{b}\cup V_{b+1}\cup \dots \cup V_{2b-1},
		\]
		where
		\[
		V_i
		= \left\{
		\sum_{k=i}^{2b-1} a_k x^k : a_i \neq 0
		\right\}, \qquad 1\le i \le 2b-1.
		\]
		For each $i$, the coefficient $a_i$ can be any nonzero element of $\mathbb{F}_p$,
		and the coefficients $a_{i+1},\dots,a_{2b-1}$ are arbitrary.  Hence, we obtain $|V_i|= (p-1)p^{ 2b-1-i}, $ for $ i=1,\dots,2b-1,$
		and
		\[
		\sum_{i=1}^{2b-1} |V_i|
		= (p-1)\sum_{t=0}^{2b-2} p^t
		= p^{ 2b-1}-1
		= |V( \varGamma(R))|.
		\]
		We decompose $ V( \varGamma(R))$ into subsets $	L = \bigcup_{k=1}^{b-1} V_k,  $ and $
		U = \bigcup_{k=b}^{2b-1} V_k,$ so that $V( \varGamma(R))=L\cup U.$\\
		For (2), let $f,g\in Z(R)\setminus\{0\}$ with $\operatorname{mindeg}(f)=i$ and
		$\operatorname{mindeg}(g)=j$, where $1\le i,j\le 2b-1$. Consider 
		\[
		f = a_i x^i + \text{(higher degree terms)}, ~\text{and}~
		g = b_j x^j + \text{(higher degree terms)},
		\]
		where $a_i,b_j \in \mathbb{F}_p^\times$.
		In product $fg$, the smallest degree term is $a_i b_j x^{i+j}$ $x^{i+j}$ with coefficient $a_i b_j$, as all other products of coefficients have strictly larger degree. Thus $fg = 0  $ if and only if $ i+j \ge 2b.$ Indeed, if $i+j \le 2b-1$, then the $x^{i+j}$ term is nonzero in $R$. If $i+j\ge 2b$, then every term of $fg$ has degree $\ge 2b$ and vanishes modulo $(x^{2b})$. Consequently, adjacency in $ \varGamma(R)$ is completely determined by the minimal degrees, that is, $f \in V_i,g\in V_j, $ with $ f\neq g$, then $f $ is adjacent to $g $ if and only if $ i+j \ge 2b.$ Thus, it follows that, for each $i$, two distinct vertices in $V_i$ are adjacent if and only if $2i\ge 2b$, that is, $i\ge b$. Thus, $V_i$ is an independent set, if $i<b,$ and a clique, if $i\ge b.$	In particular, $L=\bigcup_{k=1}^{b-1}V_k$ is a disjoint union of independent subsets. For $i\neq j$, all edges between $V_i$ and $V_j$ either exist or do not exist simultaneously. In particular, each vertex of $V_{i}$  is adjacent to $V_j$, if and only if $i+j\geq 2b.$ We note that  if $1\le i,j\le b-1$, then $i+j\le 2b-2<2b$, so there are no edges at all inside $L$. On the other hand, for all $i,j\ge b$, we have $i+j\ge 2b$. So, every pair of vertices in $	C = \bigcup_{k=b}^{2b-1} V_k = V_b \cup U$		is adjacent. Thus $C$ induces a clique, while $L$ is an independent set.
		
		For (3), with $v\in V_i$, its neighbors are exactly the vertices in $\bigcup_{j:i+j\ge 2b} V_j$, that is, each vertex of $V_{i}$ is adjacent to every $V_{j}$, if and only if $i+j\geq 2b.$ Also, for $v\in V_{i}$,  if $2i\ge 2b$, that is,  $i\ge b$, then $v$ is also adjacent to all other vertices in $V_i$. 
		Using
		\[
		\sum_{j=m}^{2b-1} |V_j| = p^{ 2b-m}-1,
		\quad\text{for }1\le m\le 2b-1,
		\]
		and with $m=2b-i$, we get the total number of vertices in subsets $V_{2b-i},\dots,V_{2b-1}$ as $\sum_{j=2b-i}^{2b-1} |V_j| = p^{ i}-1.$
		This includes $V_i$ itself, provided $2i\ge 2b$, or $i\ge b$. Thus, for $v\in V_{i}, $ we have
		\[
		\deg(v) =
		\begin{cases}
			p^{ i}-1, & \text{if } i<b,\\
			p^{ i}-2, & \text{if } i\ge b.
		\end{cases}
		\]
		In particular, for $i=2b-1$, it is clear that $i\ge b$, so each vertex in $V_{2b-1}$ is adjacent to every $V_{i}$ in $ \varGamma(R)$, and degree of $v\in V_{2b-1}$	 is  $\deg(v) = p^{ 2b-1} - 2.$ Thus $V_{2b-1}$ consists of $p-1$ universal vertices. For the middle subset with $i=b$, each vertex in $V_b$ has degree $\deg(v) = p^{ b} - 2, $ for $ v\in V_b,$ and is adjacent to all vertices in $C$ except itself.
		
		As each $V_{i}$ induces a clique if $i\geq b$ and each vertex of $V_{i}$ is adjacent to every vertex of $V_{i}$, since $i+j\geq 2b$ for $b\leq i\leq j\leq2b-1. $ Thus, the union of $V_{i}$'s is a clique $C = \bigcup_{k=b}^{2b-1} V_k$, and its size is
		\[
		|C|
		= \sum_{k=b}^{2b-1} |V_k|
		= \sum_{k=b}^{2b-1} (p-1)p^{ 2b-1-k}
		= p^{ 2b-b} - 1
		= p^{b} - 1.
		\]
		We claim that the clique number of $ \varGamma(R)$ is $\omega( \varGamma(R)) = p^b-1$. Let $K$ be any clique in $ \varGamma(R)$, and write
		\[
		i_0 = \min\{\operatorname{mindeg}(v) : v\in K\}.
		\]
		Then $K\subseteq \bigcup_{j\ge i_0} V_j$, and $K$ contains some vertex in $V_{i_0}$.
		If $w\in V_j$ lies in $K$, adjacency with a vertex in $V_{i_0}$ forces $i_0+j\ge 2b$, 	that is, $j\ge 2b-i_0$. Thus every vertex of $K$ lies in $\bigcup_{j=m}^{2b-1} V_j, $ with $ m = \max\{i_0,2b-i_0\}.$ Hence, it follows that
		\[
		|K|
		\le \sum_{j=m}^{2b-1} |V_j|
		= p^{ 2b-m}-1.
		\]
		It is clear that $m\ge b$ for all $1\le i_0\le 2b-1$, with equality $m=b$, only
		when $i_0=b$. Thus, $|K|\le p^{ 2b-b}-1 = p^b-1,$	and since $C$ has size $p^b-1$, and we obtain $\omega( \varGamma(R)) = p^b - 1.$ In particular, when $b=1$ (that is, $a=2$), $ \varGamma(R)\cong K_{p-1}$ and $\omega( \varGamma(R))=p-1$. Furthermore, $L=\bigcup_{k=1}^{b-1}V_k$ is an independent set, so $V( \varGamma(R)) = L \cup C$. For each $i<b$, the subset $V_i$ is an independent set of size	$|V_i| = (p-1)p^{ 2b-1-i}.$ In particular, $V_1$ is an independent set of size $|V_1|=(p-1)p^{ 2b-2}.$ Since, all subsets $V_i$ with $1\le i\le b-1$ are mutually nonadjacent, their union $L = \bigcup_{k=1}^{b-1} V_k$ is an independent set of size
		\[
		|L|
		= \sum_{k=1}^{b-1} |V_k|
		= \bigl(p^{ 2b-1}-1\bigr) - \sum_{k=b}^{2b-1} |V_k|
		= \bigl(p^{ 2b-1}-1\bigr) - \bigl(p^b-1\bigr)
		= p^{ 2b-1} - p^b.
		\]
		This gives a large explicit independent set complementary to the maximal clique $C$.\\
		(4) Since $V_{2b-1}$ is nonempty and consists of $p-1$ universal vertices, the graph $ \varGamma(R)$ is connected for all $b\ge1$. Moreover, if $b=1$ (so $a=2$), then $V_1$ is a clique and $ \varGamma(R)\cong K_{p-1}$. The diameter of $ \varGamma(R)$ is $1$ when $p\ge3$. For $p=2$, there is a single vertex and the diameter of $ \varGamma(R)$ is $0$.  If $b\ge 2$, so $a=2b\ge4$, and  there exist nonadjacent pairs, but any two vertices are at distance at most $2$ due to universal vertices present in $V_{2b-1}$. Thus, for $b\ge2$, the diameter of $ \varGamma(R)$ is $2$. For $b\ge 2$ ($a=2b\ge4$), consider the subsets $V_{2b-2}$ and $V_{2b-1}$, we get $2(2b-2)\ge 2b, $ and $ (2b-2)+(2b-1)\ge 2b$. So, $V_{2b-2}$ is a clique and every vertex in $V_{2b-2}$ is adjacent to every vertex in $V_{2b-1}$. Since $|V_{2b-2}|\ge 2$ and $|V_{2b-1}|\ge 1$, it yields triangles, so for all primes $p$ and all $b\ge2$ the girth of $ \varGamma(R)$ is $3$. For $b=1$, $ \varGamma(R)\cong K_{p-1}$, and the girth is $3$ if $p-1\ge3$.
	\end{proof}
	
	We will illustrate Theorem \ref{even case} for $b=3$.	Let $p$ be a prime and let $R=\mathbb{Z}_p[x]/\langle x^6\rangle \cong \mathbb{F}_p[x]/(x^6).$
	Then the zero-divisor graph  $ \varGamma(R)$  as an $\mathbb{F}_p$--vector space, can be written as
	\[
	R = \{a_0 + a_1x + a_2x^2 + a_3x^3 + a_4x^4 + a_5x^5 : a_i \in \mathbb{F}_p\},
	\]
	with multiplication modulo $x^6$. So, all terms of degree greater or equal to $6$ vanish. An element of $R$ is a unit if and only if $a_0 \neq 0$, hence the nonzero zero-divisors are
	\[
	Z(R)\setminus\{0\}
	= \{a_1x + a_2x^2 + a_3x^3 + a_4x^4 + a_5x^5 : (a_1,a_2,a_3,a_4)\neq (0,0,0,0)\}.
	\]
	Thus, $ \varGamma(R)$ has $p^5 - 1$ vertices.  For a nonzero zero-divisor $f\in Z(R)$, define the minimal degree
	\[
	\operatorname{mindeg}(f)
	= \min\{k \in \{1,2,3,4,5\} : \text{the coefficient of } x^k \text{ in } f \text{ is nonzero}\}.
	\]
	This partitions $V( \varGamma(R))=Z(R)\setminus\{0\}$ into following disjoint subsets as:
	\[
	\begin{aligned}
		V_1 &= \{a_1x + a_2x^2 + a_3x^3 + a_4x^4 + a_5x^5 : a_1 \neq 0\},\\
		V_2 &= \{a_2x^2 + a_3x^3 + a_4x^4 + a_5x^5 : a_2 \neq 0\},	V_3 = \{a_3x^3 + a_4x^4 + a_5x^5 : a_3 \neq 0\},\\
		V_4 &= \{a_4x^4 + a_5x^5 : a_4 \neq 0\}, \quad V_5 = \{a_5x^5 : a_5 \neq 0\}.
	\end{aligned}
	\]
	The cardinalities of above vertex sets are $ |V_1|= (p-1)p^4 = p^5 - p^4, |V_2| = (p-1)p^3 = p^4 - p^3, |V_3| = (p-1)p^2 = p^3 - p^2, |V_4| = (p-1)p   = p^2 - p$ and $ |V_5| = p-1$
	Thus, $V( \varGamma(R)) = V_1 \cup V_2 \cup V_3 \cup V_4 \cup V_5,$ and $|V( \varGamma(R))|=|V_1|+|V_2|+|V_3|+|V_4|+|V_5| = p^5 - 1.$  Let $f,g\in Z(R)\setminus\{0\}$ with $\operatorname{mindeg}(f)=i$ and
	$\operatorname{mindeg}(g)=j$, where $i,j\in\{1,2,3,4,5\}$. Then $fg = 0$  if and only if  $i+j \ge 6.$
	In particular, the adjacency relations in $ \varGamma(R)$ are: if $i=j=1$, then $1+1=2<6$ implies that no edges within $V_1$. So, $V_1$ is an independent set. If  $i=j=2$, then  $2+2=4<6$ implies that there are no edges within $V_2$. So, $V_2$ is an independent set. If  $i=j=3$, then $3+3=6\ge 6$, and it follows that all pairs in $V_3$ are adjacent. Thus, $V_3$ induces a clique.  In general, the adjacency relations are: $V_1$ and $V_2$ are independent sets, $V_3$, $V_4$, and $V_5$ are cliques. There  are no edges between $V_1$ and $V_2$, $V_1$ and $V_3$, or $V_1$ and $V_4$. Each vertex of $V_1$ is completely joined to $V_5$, each vertex of $V_2$ is completely joined to $V_4\cup V_5$ but not to $V_3$.  The induced subgraph on $V_3\cup V_4\cup V_5$ is complete $ K_{|V_3|+|V_4|+|V_5|}= K_{p^3-1}. $ The degree sequence of the vertices in $V_{i}$ are:
	\[
	\deg(v) =
	\begin{cases}
		|V_5| = p-1,
		& v\in V_1,\\[4pt]
		|V_4| + |V_5|
		= (p^2-p) + (p-1) = p^2 - 1,
		& v\in V_2,\\[4pt]
		(|V_3|-1) + |V_4| + |V_5|
		= (p^3-p^2-1) + (p^2-p) + (p-1) = p^3 - 2,
		& v\in V_3,\\[4pt]
		|V_2| + |V_3| + (|V_4|-1) + |V_5|
		= (p^4-p^3) + (p^3-p^2)\\
		+ (p^2-p-1) + (p-1)
		= p^4 - 2,
		& v\in V_4,\\[4pt]
		(p^5 - 1) - 1 = p^5 - 2,
		& v\in V_5.
	\end{cases}
	\]
	In particular, each vertex in $V_5$ is adjacent to every other vertex of $ \varGamma(R)$. So, $ \varGamma(R)$ is connected. Moreover, $V_3\cup V_4\cup V_5$ induces a clique
	of size $p^3-1 \ge 3$, so $ \varGamma(R)$ contains triangles and its girth is $3$.\\
	A particular example with $p=2$ is given below:\\
 For $p=2$, $R = \mathbb{F}_2[x]/(x^6), $ and every  element of $R$ can be written uniquely as
 \[
 a_0 + a_1x + a_2x^2 + a_3x^3 + a_4x^4 + a_5x^5,
 \quad a_i \in \mathbb{F}_2 = \{0,1\},
 \]
 with multiplication modulo $x^6$. So, $x^6 = 0$ and all terms of degree greater or equal to $6$ vanish. 	An element is a unit if and only if $a_0 \neq 0$. Thus, the nonzero zero-divisors are precisely the nonzero multiples of $x$
 \[
 Z(R)\setminus\{0\} 
 = \{a_1x + a_2x^2 + a_3x^3 + a_4x^4 + a_5x^5 : (a_1,a_2,a_3,a_4,a_5) \neq (0,0,0,0,0)\}.
 \]
 So, the zero-divisor graph $ \varGamma(R)$ has  $|V( \varGamma(R))| = 2^5 - 1 = 31$ vertices. For a nonzero $f\in Z(R)$, we define
 \[
 \operatorname{mindeg}(f) = \min\{k\in\{1,2,3,4,5\} : \text{the coefficient of }x^k\text{ in }f\text{ is }1\}.
 \]
 This partitions $V( \varGamma(R))$ into $V_1,\dots,V_5$. For $V_1$, minimal degree is $1$, that is the first nonzero term is $x$. Here $a_1 = 1$, and $a_2,a_3,a_4,a_5 \in \{0,1\}$ arbitrary, so $|V_1| = 2^4 = 16.$ The elements in $V_{1}$ are
 \begin{align*}
 	V_1 = \{&
 	x,
 	x + x^2,
 	x + x^3,
 	x + x^4,
 	x + x^5,x + x^2 + x^3,
 	x + x^2 + x^4,
 	x + x^2 + x^5,
 	x + x^3 + x^4,\\
 	&x + x^3 + x^5,x + x^4 + x^5,
 	x + x^2 + x^3 + x^4,
 	x + x^2 + x^3 + x^5,x + x^2 + x^4 + x^5,\\
 	&x + x^3 + x^4 + x^5,x + x^2 + x^3 + x^4 + x^5
 	\}.
 \end{align*}
 For $V_2$, the minimal degree is $2$, so the initial nonzero term is $x^2$.	Thus, $a_1 = 0$, $a_2 = 1$, and $a_3,a_4,a_5 \in \{0,1\}$ arbitrary, so $|V_2| = 2^3 = 8.$ Clearly, $V_{2}$ has elements
 \begin{align*}
 	V_2 = \{& x^2, 	x^2 + x^3, x^2 + x^4, 	x^2 + x^5,x^2 + x^3 + x^4, x^2 + x^3 + x^5, x^2 + x^4 + x^5,\\
 	&x^2 + x^3 + x^4 + x^5 	\}.
 \end{align*}
 For	$V_3$, the minimal degree is $3$, so  the initial nonzero term is $x^3$.	In this case, $a_1 = a_2 = 0$, $a_3 = 1$, and $a_4, a_5$ are arbitrary elements from the set $\{0,1\}$, resulting in $|V_3| = 2^2 = 4.$ The constituents of \( V_{3} \) are \[ V_3 = \{ x^3, x^3 + x^4, x^3 + x^5, x^3 + x^4 + x^5 \}. \]
 For $V_4$, the minimal degree is $4$, hence the initial nonzero term is $x^4$.  Here $a_1 = a_2 = a_3 = 0$, $a_4 = 1$, and $a_5 \in \{0,1\}$ arbitrary, so $|V_4| = 2^1 = 2.$ Thus, $V_4 = \{x^4, x^4 + x^5\}.$ For $V_5$, the minimal degree is $5$, so, the only nonzero term is $x^5$. 	Here $a_1 = a_2 = a_3 = a_4 = 0$, $a_5 = 1$,  and $|V_5| = 1, V_5 = \{x^5\}.$ Thus $V( \varGamma(R)) = V_1 \cup V_2 \cup V_3 \cup V_4 \cup V_5,$ and  $|V( \varGamma(R))|=16 + 8 + 4 + 2 + 1 = 31.$
 
 Recall that two vertices $f,g$ in $ \varGamma(R)$ are adjacent if and only if $i+j\ge 6$, where $i = \operatorname{mindeg}(f)$, $j=\operatorname{mindeg}(g)$ with $i,j\in\{1,2,3,4,5\}$. Thus, $V_1$ and $V_2$ are independent sets, and  $V_3$, $V_4$, and $V_5$ are cliques. There are no edges between: 	$V_1 $ and $V_2$, $V_1$ and $V_3, $  $ V_1$ and $V_4, $ and $V_2$ and $V_3.$	Also, $V_1$ is completely joined to $V_5$ (each of the 16 vertices in $V_1$ is adjacent
 to the single vertex $x^5$),  $V_2$ is completely joined to $V_4\cup V_5$ (each of the 8 vertices in $V_2$
 is adjacent to both vertices of $V_4$ and to $x^5$). Also, $V_3\cup V_4\cup V_5$ induces a complete graph $ K_7.$ The graph $ \varGamma(R)$ is shown in Figure \ref{fig 2}.
 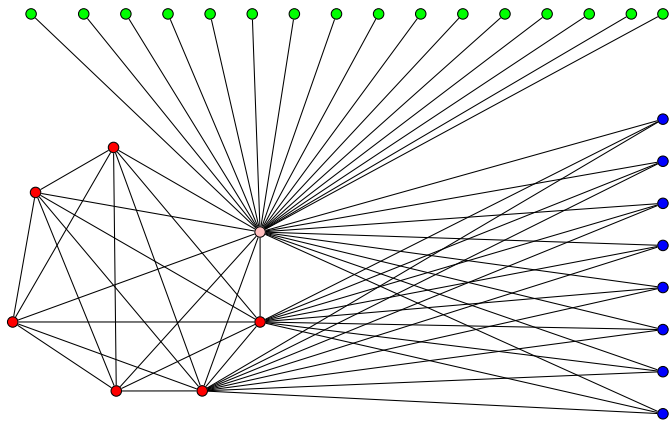
\begin{figure}[H]
 	\centering
 	\begin{tikzpicture}[
 		scale=0.7,
 		vertex/.style={circle, draw, inner sep=1.4pt}, 
 		edge/.style={thin}
 		]
 		
 		\begin{scope}[shift={(-3,0)}]
 			\node[vertex, fill=red] (c1) at (100:2.5)   {};  
 			\node[vertex, fill=red] (c2) at (140:2.5)   {};  
 			\node[vertex, fill=red] (c3) at (200:2.5)   {};  
 			\node[vertex, fill=red] (c4) at (260:2.2)   {};  
 			\node[vertex, fill=red] (c5) at (300:2.5)   {};  
 			\node[vertex, fill=red] (c6) at (340:2.5)   {};  
 			\node[vertex, fill=pink] (c7) at ( 20:2.5)   {};  
 			
 			\foreach \i/\j in {
 				c1/c2, c1/c3, c1/c4, c1/c5, c1/c6, c1/c7,
 				c2/c3, c2/c4, c2/c5, c2/c6, c2/c7,
 				c3/c4, c3/c5, c3/c6, c3/c7,
 				c4/c5, c4/c6, c4/c7,
 				c5/c6, c5/c7,
 				c6/c7}{
 				\draw[edge] (\i) -- (\j);
 			}
 		\end{scope}
 		
 		\node[vertex, fill=blue] (z1) at (7,  3.0) {};
 		\node[vertex, fill=blue] (z2) at (7,  2.2) {};
 		\node[vertex, fill=blue] (z3) at (7,  1.4) {};
 		\node[vertex, fill=blue] (z4) at (7,  0.6) {};
 		\node[vertex, fill=blue] (z5) at (7, -0.2) {};
 		\node[vertex, fill=blue] (z6) at (7, -1.0) {};
 		\node[vertex, fill=blue] (z7) at (7, -1.8) {};
 		\node[vertex, fill=blue] (z8) at (7, -2.6) {};
 		
 		\foreach \z in {z1,z2,z3,z4,z5,z6,z7,z8}{
 			\draw[edge] (\z) -- (c5);
 			\draw[edge] (\z) -- (c6);
 			\draw[edge] (\z) -- (c7);
 		}
 		
 		\node[vertex, fill=green] (x1)  at (-4.0, 5) {};
 		\node[vertex, fill=green] (x2)  at (-3.2, 5) {};
 		\node[vertex, fill=green] (x3)  at (-2.4, 5) {};
 		\node[vertex, fill=green] (x4)  at (-1.6, 5) {};
 		\node[vertex, fill=green] (x5)  at (-0.8, 5) {};
 		\node[vertex, fill=green] (x6)  at ( 0.0, 5) {};
 		\node[vertex, fill=green] (x7)  at ( 0.8, 5) {};
 		\node[vertex, fill=green] (x8)  at ( 1.6, 5) {};
 		\node[vertex, fill=green] (x9)  at ( 2.4, 5) {};
 		\node[vertex, fill=green] (x10) at ( 3.2, 5) {};
 		\node[vertex, fill=green] (x11) at ( 4.0, 5) {};
 		\node[vertex, fill=green] (x12) at ( 4.8, 5) {};
 		\node[vertex, fill=green] (x13) at ( 5.6, 5) {};
 		\node[vertex, fill=green] (x14) at ( 6.4, 5) {};
 		\node[vertex, fill=green] (x15) at ( 7,   5) {};
 		\node[vertex, fill=green] (x16) at (-5.0, 5) {};
 		
 		\foreach \v in {x1,x2,x3,x4,x5,x6,x7,x8,
 			x9,x10,x11,x12,x13,x14,x15,x16}{
 			\draw[edge] (\v) -- (c7);
 		}
 		
 	\end{tikzpicture}
 	\caption{zero-divisor graph $ \varGamma(R)$ for $R \cong \mathbb{F}_2[x]/(x^6)$. }
 	\label{fig 2}
 \end{figure}
 
 The following result gives the $A_{\alpha}$ spectra of $\mathbb{Z}_p[x]/\langle x^{2b}\rangle.$
\begin{theorem}\label{spectra thm even}
	Let $p$ be a prime and $a=2b$ with $b\in \mathbb{N}$, and let $ \varGamma(R)$ be the zero-divisor graph of $R = \mathbb{Z}_p[x]/\langle x^{2b}\rangle \cong \mathbb{F}_p[x]/(x^{2b})$. 	Then the $A_\alpha$--spectrum of $ \varGamma(R)$ is as follows: For each $i=1,\dots,2b-1$, there is an eigenvalue
		\[
		\lambda_i =
		\begin{cases}
			\alpha\bigl(p^{ i}-1\bigr), & \text{if } 1\le i \le b-1,\\[3pt]
			\alpha\bigl(p^{ i}-1\bigr) - 1, & \text{if } b\le i \le 2b-1,
		\end{cases}
		\]
		with multiplicity $n_i - 1 = (p-1)p^{ 2b-1-i} - 1$. The remaining $2b-1$ eigenvalues are the eigenvalues
		$\theta_1,\dots,\theta_{2b-1}$ of the $(2b-1)\times(2b-1)$ matrix
		$B=(B_{ij})_{1\le i,j\le 2b-1}$, where
		\[
		B = \alpha D^\ast + (1-\alpha)Q,\quad \text{with}\quad
		D^\ast = \operatorname{diag}(d_1,\dots,d_{2b-1})
		\]
		and $Q=(Q_{ij})$ the quotient matrix of $A( \varGamma(R))$
		with respect to the partition $V( \varGamma(R))=\bigcup_{i=1}^{2b-1}V_i$, given by
		\[
		Q_{ij} =
		\begin{cases}
			0, & \text{if } i=j,\ 1\le i\le b-1\ (2i<2b),\\[3pt]
			n_i - 1, & \text{if } i=j,\ b\le i\le 2b-1\ (2i\ge 2b),\\[3pt]
			n_j, & \text{if } i\ne j,\ i+j\ge 2b,\\[3pt]
			0, & \text{if } i\ne j,\ i+j<2b.
		\end{cases}
		\]
		Equivalently,
		\[
		B_{ij} =
		\begin{cases}
			\alpha d_i, & \text{if } i=j,\ 1\le i\le b-1,\\[3pt]
			\alpha d_i + (1-\alpha)(n_i-1), & \text{if } i=j,\ b\le i\le 2b-1,\\[3pt]
			(1-\alpha)n_j, & \text{if } i\ne j,\ i+j\ge 2b,\\[3pt]
			0, & \text{if } i\ne j,\ i+j<2b,
		\end{cases}
		\]
		where for $1\le i\le 2b-1$, the degree of any $v\in V_i$ is
		\[
		d_i = \deg(v) =
		\begin{cases}
			p^{ i} - 1, & \text{if } 1\le i \le b-1 \ (2i<2b),\\[3pt]
			p^{ i} - 2, & \text{if } b\le i \le 2b-1 \ (2i\ge 2b).
		\end{cases}
		\]
\end{theorem}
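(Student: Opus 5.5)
By Theorem \ref{even case}, the partition $V(\varGamma(R))=\bigcup_{i=1}^{2b-1}V_i$ is equitable for both $A(\varGamma(R))$ and $D(\varGamma(R))$. Every vertex of $V_i$ has the same degree $d_i$. Each $V_i$ is either an independent set ($i<b$) or a clique ($i\ge b$). Between $V_i$ and $V_j$ with $i\neq j$ there are either all edges ($i+j\ge 2b$) or none. So $\varGamma(R)$ is a generalized join graph $H[G_1,\dots,G_{2b-1}]$, where $G_i=\overline{K}_{n_i}$ for $i<b$, $G_i=K_{n_i}$ for $i\ge b$, and $H$ is the threshold-type graph on $\{1,\dots,2b-1\}$ with $i\sim j$ iff $i+j\ge 2b$. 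I will split the spectrum of $A_\alpha(\varGamma(R))$ into two invariant parts: vectors that sum to zero on every block, and vectors constant on blocks.

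\textbf{Step 1 (local eigenvectors).} Fix $i$ and let $x$ be supported on $V_i$ with $\sum_{v\in V_i}x_v=0$; such vectors form a space of dimension $n_i-1$. For any $j\ne i$, each vertex of $V_j$ is adjacent to all of $V_i$ or to none of it, so $(Ax)_u=0$ for $u\notin V_i$. For $u\in V_i$ we get $(Ax)_u=0$ if $i<b$, and $(Ax)_u=\sum_{w\in V_i\setminus\{u\}}x_w=-x_u$ if $i\ge b$. Since $Dx=d_ix$, we obtain
\[
A_\alpha x=\bigl(\alpha d_i+(1-\alpha)\cdot 0\bigr)x \quad (i<b),\qquad A_\alpha x=\bigl(\alpha d_i-(1-\alpha)\bigr)x \quad (i\ge b).
\]
For $i<b$ we have $d_i=p^i-1$, giving $\lambda_i=\alpha(p^i-1)$. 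For $i\ge b$ we have $d_i=p^i-2$, so the eigenvalue is $\alpha(p^i-2)-1+\alpha=\alpha(p^i-1)-1$, as claimed. The multiplicity is $n_i-1$ in each case.

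\textbf{Step 2 (quotient part).} The vectors constant on each block form a $(2b-1)$-dimensional subspace $W$. Since the partition is equitable, $W$ is $A_\alpha$-invariant, and in the basis of block indicator vectors $\mathbf{1}_{V_j}$ the action is given by $B=\alpha D^\ast+(1-\alpha)Q$. To check this, compute $A\mathbf{1}_{V_j}$ restricted to $V_i$: its value is $Q_{ij}$, namely $n_j$ if $i\ne j$ and $i+j\ge 2b$, $n_i-1$ on the diagonal when $i\ge b$, and $0$ otherwise. The operator is $\sum_i Q_{ij}\mathbf{1}_{V_i}$, which is the transpose convention. This does not matter: the column-sum form and the row form are similar, or one can write $A_\alpha(\sum_j c_j\mathbf{1}_{V_j})=\sum_i (Bc)_i\mathbf{1}_{V_i}$, which holds with row $i$ of $Q$ being the counts of neighbours of a vertex of $V_i$ in each $V_j$. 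Hence the eigenvalues of $A_\alpha$ on $W$ are exactly the eigenvalues $\theta_1,\dots,\theta_{2b-1}$ of $B$.

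\textbf{Step 3 (completeness).} The subspaces from Step 1 for different $i$ together with $W$ are mutually orthogonal. Their dimensions add up to $\sum_i(n_i-1)+(2b-1)=p^{2b-1}-1=|V(\varGamma(R))|$. So $\mathbb{R}^{V}$ is the direct sum of these $A_\alpha$-invariant subspaces, and the listed eigenvalues, with multiplicities, make up the whole spectrum.

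The main point needing care is Step 2. There one must verify the row-sum convention carefully: a vertex of $V_i$ with $i\ge b$ has $n_i-1$ neighbours inside its own block. One must also confirm that $B$ (which is not symmetric) still has real eigenvalues. This follows because $B$ is similar to the symmetric matrix $N^{1/2}BN^{-1/2}$, where $N=\operatorname{diag}(n_1,\dots,n_{2b-1})$; equivalently, the eigenvalues of $B$ are those of the restriction of the symmetric $A_\alpha$ to the invariant subspace $W$. Everything else is a direct consequence of Theorem \ref{even case}(3).
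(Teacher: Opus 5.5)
Your proposal is correct and follows essentially the same route as the paper. Both split $\mathbb{R}^{V}$ into the block-wise sum-zero spaces $W_i$, which give eigenvalue $\alpha d_i$ or $\alpha d_i-(1-\alpha)$ with multiplicity $n_i-1$, and the space of block-constant vectors, on which $A_\alpha$ acts as $B=\alpha D^\ast+(1-\alpha)Q$ because the partition is equitable. Your orthogonality argument for completeness and your symmetrization $N^{1/2}BN^{-1/2}$ are small clarifications the paper leaves out, and the transpose worry is unnecessary: since $A\mathbf{1}_{V_j}=\sum_i Q_{ij}\mathbf{1}_{V_i}$, the matrix of $A$ restricted to that subspace, in the basis $\{\mathbf{1}_{V_j}\}$, is exactly $Q$.
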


\begin{proof}
By Theorem \ref{even case}, the vertex set of the zero-divisor graph $ \varGamma(R)$ is decomposed into partition of subsets $V_i$ with $n_i = |V_i| = (p-1)p^{ 2b-1-i}, $ and $\sum_{i=1}^{2b-1}n_i = p^{ 2b-1}-1.$	By the adjacency rule $	f\in V_i, g\in V_j, $ with $f\ne g  $ and $f $ is adjacent to $ g  $ if and only if $ i+j\ge 2b.$ Thus,  for $1\le i\le b-1$, then $2i<2b$, so $V_i$ is an independent set, and 		no edges inside $V_i$. A vertex in $V_i$ is adjacent to all vertices in $V_j$ with $j\ge 2b-i$. The degree of $ v\in V_{i}$ is $\deg(v)=p^{ i}-1$ when $1\le i\le b-1$. 	If $b\le i\le 2b-1$, then $2i\ge 2b$, so $V_i$ is a clique, and 
		each vertex in $V_i$ is adjacent to the other $n_i-1$ vertices in $V_i$
		and to all vertices in $V_j$ for $j\ge 2b-i$, $j\ne i$. Thus, for $v\in V_{i}$, the degree of vertex $v$ is		$\deg(v) = p^{ i}-2$, when $b\le i\le 2b-1$. Thus, the degree of every vertex is
		\[
		d_i = \deg(v) =
		\begin{cases}
			p^{ i} - 1, & \text{if } 1\le i \le b-1 \ (2i<2b),\\[3pt]
			p^{ i} - 2, & \text{if } b\le i \le 2b-1 \ (2i\ge 2b).
		\end{cases}
		\]
	Now, for each $i=1,\dots,2b-1$, define
	\[
	W_i = \Bigl\{x\in\mathbb{R}^{V( \varGamma(R))} :
	\mathrm{supp}(x)\subseteq V_i,\ \sum_{v\in V_i} x_v = 0\Bigr\},
	\]
	where  for $x = (x_v)_{v \in V( \varGamma(R))} \in \mathbb{R}^{V( \varGamma(R))},$ we have $\operatorname{supp}(x)= \{  v \in V( \varGamma(R)) : x_v \neq 0  \}.$ 	Then $\dim W_i = |V_i|-1 = n_i -1$. We show that each $W_i$ is invariant under $A_\alpha$ and compute the eigenvalue. Let $x\in W_i$, so $x$ is supported in $V_i$ and $\sum_{v\in V_i}x_v=0$. For $1\le i\le b-1$, and  $v\in V_i$, there are no neighbors of $v$ in $V_i$, and $x$ vanishes on all
	other subsets $V_{j}$, so it implies that $(Ax)_v=0$. For $v\in V_k$ with $k\ne i$, if $i+k<2b$, then $v$ has no neighbors in $V_i$, and hence $(Ax)_v=0$.   If $i+k\ge 2b$, then $v$ is adjacent to all vertices in $V_i$, hence by the defining condition of $W_i$, we have
	\[
	(Ax)_v = \sum_{u\in V_i} x_u = 0.
	\]
	Thus $Ax=0$. On the other hand, $Dx=d_i x=(p^{ i}-1)x$ on $V_i$, and therefore
	\[
	A_\alpha x = \alpha Dx + (1-\alpha)Ax
	= \alpha(p^{ i}-1)x.
	\]
	So every nonzero vector in $W_i$ is an eigenvector with eigenvalue
	\[
	\lambda_i = \alpha(p^{ i}-1),\quad 1\le i\le b-1.
	\]
	For \( b \leq i \leq 2b-1 \), with \( v \in V_i \), the neighbors of \( v \) in the support of \( x \) comprise the other vertices of \( V_i \). Thus, we have \[ (Ax)_v = \sum_{\substack{u \in V_i \\ u \neq v}} x_u = \sum_{u \in V_i} x_u - x_v = -x_v. \] For $v\in V_k$ with $k\ne i$, if $i+k<2b$, then $v$ lacks neighbors in $V_i$, so it gives $(Ax)_v=0$.
	 If $i+k\ge 2b$, then $v$ is adjacent to all vertices in $V_i$, and it gives
	\[
	(Ax)_v = \sum_{u\in V_i} x_u = 0.
	\]
	Hence $Ax=-x$ on its support, and $Dx=d_i x =(p^{ i}-2)x$ on $V_i$. Thus
	\[
	A_\alpha x = \alpha Dx + (1-\alpha)Ax
	= \alpha(p^{ i}-2)x + (1-\alpha)(-x)
	= \bigl(\alpha(p^{ i}-1)-1\bigr)x.
	\]
	So, each nonzero vector in $W_i$ is an eigenvector with eigenvalue $\lambda_i = \alpha(p^{ i}-1)-1,$ for $b\le i\le 2b-1.$	So, $W_i$ is $A_\alpha$--invariant and contributes an eigenvalue $\lambda_i$ with multiplicity $\dim W_i=n_i-1$. Summing over $i=1,\dots,2b-1$, the overall dimension of $W = \bigoplus_{i=1}^{2b-1} W_i$	is
	\[
	\sum_{i=1}^{2b-1}(n_i-1) = \left(\sum_{i=1}^{2b-1}n_i\right) - (2b-1) = \bigl(p^{ 2b-1}-1\bigr) - (2b-1) = p^{ 2b-1} - 2b.
	\]
	Now, take in consideration vectors that are constant on each subset $V_i$ as
	\[
	U_0 = \{x\in\mathbb{R}^{V( \varGamma(R))} : x \text{ is constant on each }V_i\}.
	\]
	Then $\dim U_0 = 2b-1$, as $\mathbb{R}^{V( \varGamma(R))} = W \oplus U_0,$ and 
	\[
	\dim W + \dim U_0 = (p^{ 2b-1}-2b)+(2b-1) = p^{ 2b-1}-1 = |V( \varGamma(R))|.
	\]
	Clearly, the partition $\{V_i\}$ is equitable as every vertex in $V_i$ has the same number
	of neighbors in each $V_j$. Thus $U_0$ is invariant under $A$, and hence under  $A_\alpha$. Let $x\in U_0$ and write $x_i$ for the common value of $x$ on $V_i$. Encode $x$
	by the column vector
	\[
	\mathbf{x} = (x_1,\dots,x_{2b-1})^T \in \mathbb{R}^{2b-1}.
	\]
	For $v\in V_i$, we have
	\[
	(Ax)_v = \sum_{j=1}^{2b-1} Q_{ij} x_j,
	\]
	where $Q_{ij}$ is the number of neighbors in $V_j$ of a given vertex in $V_i$.
	By the adjacency rule $i+j\ge 2b$ in $ \varGamma(R)$.  If $1\le i\le b-1$, then $2i<2b$, so $V_i$ has no internal edges, and $Q_{ii} = 0.$
		For $j\ne i$, a vertex in $V_i$ is adjacent to all vertices of $V_j$ if $j\ge 2b-i$, and to none otherwise. Thus, we have
		\[
		Q_{ij} =
		\begin{cases}
			n_j, & j\ne i,\ j\ge 2b-i,\\[2pt]
			0, & j\ne i,\ j< 2b-i.
		\end{cases}
		\]
		 If $b\le i\le 2b-1$, then $V_i$ is a clique, so each vertex in $V_i$ is adjacent
		to the other $n_i-1$ vertices in $V_i$ and to all vertices in $V_j$ for $j\ge 2b-i$,
		$j\ne i$. Hence, we have
		\[
		Q_{ii} = n_i-1,\qquad
		Q_{ij} =
		\begin{cases}
			n_j, & j\ne i,\ j\ge 2b-i,\\[2pt]
			0, & j\ne i,\ j< 2b-i.
		\end{cases}
		\]
	Now, on $U_0$ it is evident that $Ax  $ is equivalent to $ Q\mathbf{x},$ and $Dx  $ is equivalent to $ D^\ast \mathbf{x},$	where $D^\ast = \operatorname{diag}(d_1,\dots,d_{2b-1})$.
	Therefore, we obtain $A_\alpha x
	= \alpha D x + (1-\alpha) A x$ is equivalent to $\bigl(\alpha D^\ast + (1-\alpha)Q\bigr)\mathbf{x}= B \mathbf{x}.$
	Thus, the restriction of $A_\alpha$ to $U_0$ is represented by $B$
	in the basis $\{\mathbf{1}_{V_1},\dots,\mathbf{1}_{V_{2b-1}}\}$,
	where $\mathbf{1}_{V_i}$ is the indicator of $V_i$. We note that indicator vector $\mathbf{1}_{V_i}$ is a vector which is $1$ for all coordinate $x_{i}$ of eigenvector $X$ on $V_{i},$ and $0$ on other sets $V_{j}$.
	As $\dim U_0 = 2b-1$, so the eigenvalues of this restriction are exactly the simple eigenvalues $\theta_1,\dots,\theta_{2b-1}$ of $B$. So, $B=(B_{ij})_{1\le i,j\le 2b-1}$ in $2\times 2$ can be put as $B =
	\begin{pmatrix}
		B^{(11)} & B^{(12)}\\[4pt]
		B^{(21)} & B^{(22)}
	\end{pmatrix},$	where $B^{(11)}$ is of size $(b-1)\times(b-1)$, $B^{(22)}$ is of size $b\times b$, and
	$B^{(12)},B^{(21)}$ have the corresponding rectangular sizes. The top--left block is diagonal:
	$B^{(11)} =
	\begin{pmatrix}
		\alpha d_1   & 0           & \cdots & 0\\
		0            & \alpha d_2  & \cdots & 0\\
		\vdots       & \vdots      & \ddots & \vdots\\
		0            & 0           & \cdots & \alpha d_{b-1}
	\end{pmatrix}.$	
	The top--right block $B^{(12)}$ (of size $(b-1)\times b$) has the form
	\[
	B^{(12)} =
	\begin{pmatrix}
		0              & 0              & \cdots & 0                  & (1-\alpha)n_{2b-1}\\[2pt]
		0              & 0              & \cdots & (1-\alpha)n_{2b-2} & (1-\alpha)n_{2b-1}\\[2pt]
		\vdots         & \vdots         & \ddots & \vdots             & \vdots\\[2pt]
		0              & (1-\alpha)n_{b+1} & \cdots & (1-\alpha)n_{2b-2} & (1-\alpha)n_{2b-1}
	\end{pmatrix}.
	\]
	The bottom--left block $B^{(21)}$ (of size $b\times(b-1)$) is
	\[
	B^{(21)} =
	\begin{pmatrix}
		0              & 0              & \cdots & 0               & (1-\alpha)n_{b-1}\\[2pt]
		0              & 0              & \cdots & (1-\alpha)n_{b-2} & (1-\alpha)n_{b-1}\\[2pt]
		\vdots         & \vdots         & \ddots & \vdots          & \vdots\\[2pt]
		0              & (1-\alpha)n_{2} & \cdots & (1-\alpha)n_{b-2} & (1-\alpha)n_{b-1}\\[2pt]
		(1-\alpha)n_{1} & (1-\alpha)n_{2} & \cdots & (1-\alpha)n_{b-2} & (1-\alpha)n_{b-1}
	\end{pmatrix},
	\]
	
	Finally, the bottom--right block $B^{(22)}$ (of size $b\times b$) is
	\begin{footnotesize}
		\[
	B^{(22)} =
	\begin{pmatrix}
		\alpha d_{b} + (1-\alpha)(n_{b}-1)   & (1-\alpha)n_{b+1} & \cdots & (1-\alpha)n_{2b-1}\\[2pt]
		(1-\alpha)n_{b} & \alpha d_{b+1} + (1-\alpha)(n_{b+1}-1) & \cdots & (1-\alpha)n_{2b-1}\\[2pt]
		\vdots & \vdots & \ddots & \vdots\\[2pt]
		(1-\alpha)n_{b} & (1-\alpha)n_{b+1} & \cdots &
		\alpha d_{2b-1} + (1-\alpha)(n_{2b-1}-1)
	\end{pmatrix},
	\]
	\end{footnotesize}
	 where, for $1\le i\le 2b-1$,
	\[
	d_i =
	\begin{cases}
		p^{ i}-1, & 1\le i\le b-1,\\[3pt]
		p^{ i}-2, & b\le i\le 2b-1,
	\end{cases}
	\qquad
	n_i = (p-1)p^{ 2b-1-i}.
	\]
\end{proof}

We consider the  example $ \varGamma(\mathbb{F}_2[x]/(x^{2b}))$, and discuss it in detail.
\begin{example}\end{example}
	Let $p=2$ and $b=3$, so that $2b=6$ and $2b-1=5$. Consider the ring $R=\mathbb{Z}_2[x]/(x^6)\cong \mathbb{F}_2[x]/(x^6),$	and its zero--divisor graph $ \varGamma(R)$ with the partition $V( \varGamma(R))=\bigcup_{i=1}^{5}V_i$. From Theorem \ref{spectra thm even}, we have $n_i =2^{5-i}, $ for $ i=1,\dots,5,$ and hence $|V( \varGamma(R))| = \sum_{i=1}^5 n_i = 31 = 2^5 - 1$. The degrees of $ \varGamma(R)$ are $d_1 = 2^1-1 = 1,d_2 = 2^2-1 = 3,d_3 = 2^3-2 = 6,d_4 = 2^4-2 = 14, $ and $d_5 = 2^5-2 = 30.$
	From Theorem~\ref{spectra thm even}, the $A_\alpha$--eigenvalues corresponding
	to the subspaces $W_i$ (supported on $V_i$ with sum zero) are: 
	\[
	\begin{aligned}
		\lambda_1 &= \alpha(2^1-1) = \alpha, 
		&\quad \operatorname{mult}(\lambda_1) &= n_1 - 1 = 16 - 1 = 15,\\
		\lambda_2 &= \alpha(2^2-1) = 3\alpha, 
		&\quad \operatorname{mult}(\lambda_2) &= n_2 - 1 = 8 - 1 = 7,\\
		\lambda_3 &= \alpha(2^3-1) - 1 = 7\alpha - 1,
		&\quad \operatorname{mult}(\lambda_3) &= n_3 - 1 = 4 - 1 = 3,\\
		\lambda_4 &= \alpha(2^4-1) - 1 = 15\alpha - 1,
		&\quad \operatorname{mult}(\lambda_4) &= n_4 - 1 = 2 - 1 = 1,\\
		\lambda_5 &= \alpha(2^5-1) - 1 = 31\alpha - 1,
		&\quad \operatorname{mult}(\lambda_5) &= n_5 - 1 = 1 - 1 = 0,
	\end{aligned}
	\]
	where $ \operatorname{mult}(\lambda_i)$ is the multiplicity of eigenvalue $\lambda_{i}.$ In this way, we obtain  $15+7+3+1 = 26$ eigenvalues. The remaining $5$ eigenvalues come from the restriction of $A_\alpha$ to
	the subspace $U_0$ of vectors that are constant on each $V_i$.
	As in Theorem~\ref{spectra thm even}, this restriction is represented by the
	$(2b-1)\times(2b-1)$ matrix $B=\alpha D^\ast + (1-\alpha)Q,$	where $D^\ast = \operatorname{diag}(d_1,\dots,d_5)$ and $Q$ is the
	quotient matrix of the adjacency matrix with respect to the partition
	$V_1,\dots,V_5$.
	For $p=2$, $b=3$, we have $D^\ast = \operatorname{diag}(1,3,6,14,30)$. 	The entries of $Q$ are given by
	\[
	Q_{ij} =
	\begin{cases}
		0,           & i=j,\ i=1,2,\\[2pt]
		n_i - 1,     & i=j,\ i=3,4,5,\\[2pt]
		n_j,         & i\ne j,\ i+j\ge 6,\\[2pt]
		0,           & i\ne j,\ i+j < 6,
	\end{cases},
	\]
	where $(n_1,n_2,n_3,n_4,n_5) = (16,8,4,2,1)$. Thus, $Q =
	\begin{pmatrix}
		0 & 0 & 0 & 0 & 1\\
		0 & 0 & 0 & 2 & 1\\
		0 & 0 & 3 & 2 & 1\\
		0 & 8 & 4 & 1 & 1\\
		16 & 8 & 4 & 2 & 0
	\end{pmatrix}.$
	Hence $B = \alpha D^\ast + (1-\alpha)Q$	is explicitly given as
	\[
	B =
	\begin{pmatrix}
		\alpha & 0 & 0 & 0 & 1-\alpha\\[3pt]
		0 & 3\alpha & 0 & 2(1-\alpha) & 1-\alpha\\[3pt]
		0 & 0 & 3\alpha+3 & 2(1-\alpha) & 1-\alpha\\[3pt]
		0 & 8(1-\alpha) & 4(1-\alpha) & 13\alpha+1 & 1-\alpha\\[3pt]
		16(1-\alpha) & 8(1-\alpha) & 4(1-\alpha) & 2(1-\alpha) & 30\alpha
	\end{pmatrix}.
	\]
	Let $\theta_1,\dots,\theta_5$ denote the eigenvalues of $B$.  Then, the $A_\alpha$--spectrum of $ \varGamma(R)$  is
	\[
	\sigma\bigl(A_\alpha( \varGamma(R))\bigr)
	=
	\bigl\{
	\underbrace{\alpha,\dots,\alpha}_{15},
	\underbrace{3\alpha,\dots,3\alpha}_{7},
	\underbrace{7\alpha-1,7\alpha-1,7\alpha-1}_{3},
	15\alpha-1,
	\theta_1,\theta_2,\theta_3,\theta_4,\theta_5
	\bigr\},
	\]
	where $\theta_1,\dots,\theta_5$ are the eigenvalues of the $5\times5$
	matrix $B$ above.

%
The following are the immediate consequences of Theorem \ref{spectra thm even}, and gives the adjacency and the singless Laplacian spectra of $ \varGamma(\mathbb{Z}_p[x]/\langle x^{2b}\rangle).$
\begin{corollary}\label{cor:adj-Q-even}
	Let $p$ be a prime and $a=2b$ with $b\in\mathbb{N}$, and let $ G= \varGamma(R)$ be the zero-divisor graph of $R = \mathbb{Z}_p[x]/\langle x^{2b}\rangle \cong \mathbb{F}_p[x]/(x^{2b}).$ From \ref{spectra thm even}, let $V(G)=\bigcup_{i=1}^{2b-1}V_i$ be the partition of $ \varGamma(R)$ with
	$n_i = |V_i| = (p-1)p^{ 2b-1-i}$, and
	\[
	d_i = \deg(v) =
	\begin{cases}
		p^{ i}-1, & 1\le i\le b-1,\\[3pt]
		p^{ i}-2, & b\le i\le 2b-1,
	\end{cases}
	\quad v\in V_i.
	\]
	Let $Q^\ast$ be the quotient matrix of $A(G)$ with respect to this partition, as in Theorem~\ref{spectra thm even}, and let $D^\ast = \operatorname{diag}(d_1,\dots,d_{2b-1}).$
	Then, we have the following.
	\begin{enumerate}
		\item The adjacency spectrum of $G$ is
		\[
		\{0\}^{\left[\sum_{i=1}^{b-1}(n_i-1)\right]}
		\cup
		\{-1\}^{\left[\sum_{i=b}^{2b-1}(n_i-1)\right]}
		\cup
		\{\eta_1,\dots,\eta_{2b-1}\},
		\]
		where $\eta_1,\dots,\eta_{2b-1}$ are the eigenvalues of the quotient matrix $Q^\ast$.
		
		\item The signless Laplacian spectrum of $G$, where $Q(G)=D(G)+A(G)$, is
		\[
		\bigcup_{i=1}^{b-1} \{(p^{ i}-1)^{[ n_i-1 ]}\} \cup \bigcup_{i=b}^{2b-1} \{(p^{ i}-3)^{[ n_i-1 ]}\} \cup 	\{\zeta_1,\dots,\zeta_{2b-1}\},
		\]
		where $\zeta_1,\dots,\zeta_{2b-1}$ are the eigenvalues of the $(2b-1)\times(2b-1)$ matrix $D^\ast + Q^\ast.$
	\end{enumerate}
\end{corollary}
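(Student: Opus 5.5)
The plan is to read both parts off Theorem~\ref{spectra thm even} by choosing $\alpha$, rather than redoing the eigenvector analysis. That theorem splits $\mathbb{R}^{V(G)}=W\oplus U_0$ with $W=\bigoplus_{i=1}^{2b-1}W_i$. Each $W_i$ consists of eigenvectors of $A_\alpha(G)$ with eigenvalue $\lambda_i(\alpha)$, and $\dim W_i=n_i-1$. The restriction of $A_\alpha(G)$ to $U_0$ is represented by $B(\alpha)=\alpha D^\ast+(1-\alpha)Q^\ast$ in the basis $\{\mathbf{1}_{V_1},\dots,\mathbf{1}_{V_{2b-1}}\}$. These facts hold for every $\alpha\in[0,1]$, so I will specialize $\alpha$ and simply track how the eigenvalues and the representing matrix change.

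\emph{Part (1).} Take $\alpha=0$, so that $A_0(G)=A(G)$. The eigenvalues on the $W_i$ become $\lambda_i(0)=0$ for $1\le i\le b-1$ and $\lambda_i(0)=-1$ for $b\le i\le 2b-1$, each with multiplicity $n_i-1$. Adding these multiplicities gives the exponents $\sum_{i=1}^{b-1}(n_i-1)$ for $0$ and $\sum_{i=b}^{2b-1}(n_i-1)$ for $-1$. Also $B(0)=Q^\ast$, so the remaining $2b-1$ eigenvalues are $\eta_1,\dots,\eta_{2b-1}$, the eigenvalues of $Q^\ast$.

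\emph{Part (2).} Use the identity $Q(G)=D(G)+A(G)=2A_{1/2}(G)$ from the introduction. Multiplying a matrix by $2$ doubles its eigenvalues and keeps the same eigenvectors. On $W_i$ with $i\le b-1$ this gives the eigenvalue $2\cdot\tfrac12(p^{i}-1)=p^{i}-1$. On $W_i$ with $i\ge b$ it gives $2\bigl(\tfrac12(p^{i}-1)-1\bigr)=p^{i}-3$. In both cases the multiplicity is $n_i-1$. On $U_0$ the restriction of $Q(G)$ is represented by $2B(1/2)=D^\ast+Q^\ast$, whose eigenvalues are $\zeta_1,\dots,\zeta_{2b-1}$. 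As a sanity check I will verify the $W_i$ eigenvalues directly from the proof of Theorem~\ref{spectra thm even}. There $Ax=0$ for $i<b$ and $Ax=-x$ for $i\ge b$, so $(D+A)x=d_ix$ or $(d_i-1)x$. This gives $p^i-1$ and $(p^i-2)-1=p^i-3$, which agrees.

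The step needing the most care is the bookkeeping rather than anything conceptual. First, the spectra are to be read as multisets. When $n_i-1=0$ (for example $i=2b-1$ with $p=2$), that part contributes nothing. Values coming from different $W_i$, or an $\eta_j$ or $\zeta_j$, may coincide, and then multiplicities simply add. Second, the total count must be right. The proof of Theorem~\ref{spectra thm even} gives $\dim W+\dim U_0=p^{2b-1}-1=|V(G)|$, so the listed eigenvalues exhaust the spectrum. This also uses that $B$ represents the restriction of the (diagonalizable) matrix $A_\alpha(G)$ to the invariant subspace $U_0$, so its eigenvalues, counted with multiplicity, account for the rest. Finally, I will note the notational clash between the signless Laplacian $Q(G)$ and the quotient matrix, which is why the corollary writes $Q^\ast$ for the latter.
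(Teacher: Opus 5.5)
Your proposal is correct and matches the paper's proof: specialize Theorem~\ref{spectra thm even} at $\alpha=0$, where $B(0)=Q^\ast$, to get part (1), and use $Q(G)=2A_{1/2}(G)$ together with $2B(1/2)=D^\ast+Q^\ast$ to get part (2). Your extra bookkeeping is a sound addition: the multiset reading, the zero multiplicities, and the dimension count $\dim W+\dim U_0=|V(G)|$. In particular, you rightly avoid the paper's claim that the eigenvalues of $B$ are simple.
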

\begin{proof}
	(1) The adjacency matrix is $A(G)=A_0(G)$. With $\alpha=0$ in Theorem~\ref{spectra thm even}, we have for
	$1\le i\le b-1, \lambda_i(0) = 0,$ with multiplicity $n_i-1,$	and for $b\le i\le 2b-1$, $\lambda_i(0) = -1,$ with multiplicity $n_i-1.$ Theorem \ref{spectra thm even}, further states that the remaining $2b-1$ eigenvalues of $A_0(G)$ are precisely the eigenvalues of
	\[
	B(0) = 0\cdot D^\ast + (1-0)Q^\ast = Q^\ast,
	\]
	which is the quotient matrix of $A(G)$ with respect to the partition $\{V_i\}$.\\
	(2) The signless Laplacian of $ \varGamma(R)$ is
	\[
	Q(G) = D(G)+A(G) = 2A_{1/2}(G).
	\]
	If $\mu$ is an eigenvalue of $A_{1/2}(G)$ of multiplicity $m$, then $2\mu$ is an eigenvalue of $Q(G)$ with the same multiplicity $m$. From Theorem~\ref{spectra thm even}, for $\alpha=\tfrac12$ and $1\le i\le b-1,$ we obtain
	\[
	\lambda_i\Bigl(\frac12\Bigr)
	= \tfrac12\bigl(p^{ i}-1\bigr),
	\quad\text{ with multiplicity }n_i-1,
	\]
	and for $b\le i\le 2b-1$,
	\[
	\lambda_i\Bigl(\frac12\Bigr)
	= \tfrac12\bigl(p^{ i}-1\bigr)-1
	= \frac{p^{ i}-3}{2},
	\quad\text{with multiplicity }n_i-1.
	\]
	Thus, the corresponding eigenvalues of $Q(G)=2A_{1/2}(G)$ are
	\[
	p^{ i}-1,\quad 1\le i\le b-1,
	\quad\text{and}\quad
	p^{ i}-3,\quad b\le i\le 2b-1,
	\]
	with multiplicities $n_i-1$.	For the remaining $2b-1$ eigenvalues, we see that
	\[
	B\Bigl(\frac12\Bigr) = \frac12D^\ast + \frac12Q^\ast = \frac12\bigl(D^\ast + Q^\ast\bigr).
	\]
	Hence the remaining $2b-1$ eigenvalues of $Q(G)$ are exactly twice the eigenvalues of
	$B\bigl(\tfrac12\bigr)$, that is, they are the eigenvalues of $D^\ast+Q^\ast$. 
\end{proof}

The following consequence of Theorem \ref{spectra thm even} show that the Laplacian matrix of $ \varGamma(R)\cong  \varGamma(\mathbb{Z}_p[x]/\langle x^{2b}\rangle)$ has only integer eigenvalues.
\begin{corollary}\label{L integral even}
	The Laplacian spectrum of $ \varGamma(R)$ is integral, and its spectrum is 
	\[ \{0\}
	\cup
	\bigcup_{\substack{1\le i\le 2b-1\\ i\neq b}}
	\bigl\{(p^{ i}-1)^{[n_i]}\bigr\}
	\cup
	\bigl\{(p^{ b}-1)^{[n_b-1]}\bigr\}. \]
\end{corollary}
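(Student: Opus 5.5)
The plan is to use the $W_i$ decomposition from the proof of Theorem~\ref{spectra thm even} for most of the eigenvalues, and to handle the remaining $2b-1$ eigenvalues with the join/union structure of $\varGamma(R)$, which is a threshold graph. The Laplacian is not literally a member of the $A_\alpha$ family, but it equals $(A_\alpha-A_\beta)/(\alpha-\beta)$. More directly, the subspaces $W_i$ and $U_0$ there are invariant under both $D$ and $A$, so they are invariant under $L=D-A$.

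\textbf{Step 1: the eigenvalues coming from the $W_i$.} For $x\in W_i$ the computation in the proof of Theorem~\ref{spectra thm even} gives $Ax=0$ if $i\le b-1$ and $Ax=-x$ if $i\ge b$. Hence:
\begin{itemize}
\item for $i\le b-1$, $Lx=d_ix=(p^i-1)x$;
\item for $i\ge b$, $Lx=(d_i+1)x=(p^i-2+1)x=(p^i-1)x$.
\end{itemize}
So for every $i$ the eigenvalue $p^i-1$ occurs with multiplicity at least $n_i-1$. These account for $p^{2b-1}-2b$ eigenvalues. It remains to show that the $2b-1$ eigenvalues of $D^\ast-Q^\ast$ (the restriction of $L$ to $U_0$) are exactly $0$ together with one copy of $p^i-1$ for each $i\neq b$. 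Adding these to Step 1 gives the stated spectrum.

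\textbf{Step 2: the quotient eigenvalues via a threshold-graph decomposition.} By Theorem~\ref{even case}(3), $\varGamma(R)$ can be built by repeatedly peeling off dominating cliques and isolated sets, in the order
\[
V_{2b-1},\; V_1,\; V_{2b-2},\; V_2,\; \dots,\; V_{b+1},\; V_{b-1},\; V_b .
\]
Here $V_{2b-j}$ is joined to everything that remains after the earlier sets are removed, and $V_j$ ($j<b$) is isolated in what remains. I will define graphs inductively:
\[
G_b=K_{n_b},\qquad H_{k}=G_{2b-k-1}\cup \overline{K}_{n_{k}},\qquad G_{2b-k}=K_{n_{2b-k}}\vee H_k ,
\]
for $k=b-1,b-2,\dots,1$, so that $G_{2b-1}=\varGamma(R)$.

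I will then use two standard facts about Laplacian spectra:
\begin{enumerate}
\item Adding isolated vertices only adds zeros to the spectrum.
\item If $G_1$ and $G_2$ have orders $m_1$ and $m_2$, then $G_1\vee G_2$ has Laplacian spectrum consisting of $0$, $m_1+m_2$, $\mu+m_2$ for each nonzero-index eigenvalue $\mu$ of $G_1$, and $\mu'+m_1$ for each nonzero-index eigenvalue $\mu'$ of $G_2$.
\end{enumerate}
The vertex counts telescope: $|V(G_{2b-k})|=\sum_{j\ge k}n_j=p^{2b-k}-1$, and $|V(H_k)|=p^{2b-k}-1-n_{2b-k}$. Inductively, every eigenvalue of $H_k$ is shifted by $n_{2b-k}$ when the clique is joined. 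Each shift should turn the zeros contributed by $\overline K_{n_k}$ into the value $n_{2b-k}=p^{k}-1$. The previously created values are shifted up by successive $n$'s and should land exactly on $p^i-1$. The final join creates $p^{2b-1}-1$ with multiplicity $n_{2b-1}$.

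\textbf{Step 3: bookkeeping, the main obstacle.} The hardest part is verifying that all the accumulated shifts land exactly on $p^i-1$ with the claimed multiplicities: $n_i$ for $i\neq b$ and $n_b-1$ for $i=b$, plus the single $0$. I will check this through the identities
\[
n_b+n_{b+1}+\dots+n_{2b-1}=p^b-1, \qquad p^{k}-1+n_{2b-k-1}+\dots=p^{k+1}-1\text{-type sums},
\]
that is, the sums $\sum_{j=m}^{2b-1}n_j=p^{2b-m}-1$, applied at each stage. As a consistency check, the multiplicities must add to $p^{2b-1}-1$, and the trace must equal $\sum_i n_id_i$. Since every resulting eigenvalue is of the form $p^i-1$ or $0$, Laplacian integrality follows immediately.
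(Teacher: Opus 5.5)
Your route is correct, and it differs genuinely from the paper's.

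\textbf{Comparison with the paper.} For the $2b-1$ eigenvalues on $U_0$, the paper works with the quotient Laplacian $\overline{L}=D^\ast-Q^\ast$. For each $k\neq b$ it writes down an explicit eigenvector ($v^{(k)}$ for $k<b$, $w^{(k)}$ for $k>b$) depending on a tuned scalar. It then verifies $\overline{L}v=(p^k-1)v$ row by row using geometric sums of the $n_j$. It finishes by noting that these $2b-2$ values together with $0$ are $2b-1$ distinct eigenvalues of a $(2b-1)\times(2b-1)$ matrix.

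You instead use the fact that $\varGamma(R)$ is a threshold graph, built by alternately adding the isolated sets $V_k$ and the dominating cliques $V_{2b-k}$, and you iterate the Laplacian join formula. This gives the whole Laplacian spectrum, with multiplicities, in one sweep, so your Step 1 is really only a consistency check. It needs no eigenvector guessing and no distinctness count, and it explains integrality structurally. In fact, Merris's theorem (the Laplacian spectrum of a threshold graph is the conjugate of its degree sequence) gives the statement almost immediately. What the paper's route buys in exchange is explicit eigenvectors of $\overline{L}$.

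Your Step 1 is also cleaner than the paper's version. The paper subtracts $A_\alpha$- and $A_\beta$-eigenvalues, which tacitly relies on the common eigenvectors in the $W_i$; you make that explicit.

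\textbf{Errors in the bookkeeping.} As sketched, your bookkeeping is wrong in two places, and if followed literally it overshoots.
\begin{enumerate}
\item $n_{2b-k}=(p-1)p^{k-1}$, which equals $p^k-1$ only for $k=1$. The $n_k$ zeros of $\overline{K}_{n_k}$ become $n_{2b-k}$ at the join with $K_{n_{2b-k}}$. They reach $p^k-1$ only after the later shifts.
\item $G_{2b-k}$ has vertex set $V_k\cup\dots\cup V_{2b-k}$. So $|V(G_{2b-k})|=\sum_{j=k}^{2b-k}n_j=p^{2b-k}-p^{k-1}$, not $p^{2b-k}-1$.
\end{enumerate}

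The correct rule is this: every nonzero eigenvalue of $G_{2b-k}$ is raised by exactly $\sum_{j=2b-k+1}^{2b-1}n_j=p^{k-1}-1$ on the way to $G_{2b-1}=\varGamma(R)$. Consequently:
\begin{enumerate}
\item the value $|V(G_{2b-k})|$, created with multiplicity $n_{2b-k}$ at the join with $K_{n_{2b-k}}$, ends at $p^{2b-k}-1$;
\item the $n_k$ zeros of $\overline{K}_{n_k}$ end at $\sum_{j=2b-k}^{2b-1}n_j=p^k-1$;
\item the $n_b-1$ copies of $n_b$ coming from $K_{n_b}$ end at $\sum_{j=b}^{2b-1}n_j=p^b-1$;
\item a single $0$ remains.
\end{enumerate}
This is exactly the stated spectrum.
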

\begin{proof}
	Since, for any $\alpha\neq\beta$,
	\[
	\frac{1}{\alpha-\beta}\bigl(A_\alpha(G)-A_\beta(G)\bigr)
	= D(G)-A(G)=L(G).
	\]
	So, by Theorem \ref{spectra thm even}, we get $$\alpha\bigl(p^{ i}-1\bigr)-\beta\bigl(p^{i}-1\bigr)=(\alpha-\beta)(p^{i}-1).$$
	Thus, for $1\le i \le b-1,$ it follows that $p^{i}-1$  is the Laplacian eigenvalue of $ \varGamma(R)$ with $n_i - 1 = (p-1)p^{ 2b-1-i} - 1$. Similarly, for $b\le i \le 2b-1,$ we see that 
	$$\alpha\bigl(p^{ i}-1\bigr) - 1-\beta\bigl(p^{ i}-1\bigr) + 1=(\alpha-\beta)(p^{i}-1) $$ is the eigenvalue of $ \varGamma(R)$ with multiplicity $n_i - 1 = (p-1)p^{ 2b-1-i} - 1$.  By Theorem~\ref{spectra thm even}, the quotient of $A_\alpha( \varGamma(R))$ 
	with respect to the partition $\{V_i\}_{i=1}^{2b-1}$ is
	\[
	B(\alpha) = \alpha D^\ast + (1-\alpha)Q^\ast,
	\]
	where $D^\ast=\operatorname{diag}(d_1,\dots,d_{2b-1})$ and $Q^\ast$ is the quotient of $A(G)$. Hence for any $\alpha\neq\beta$,
	\[
	\frac{1}{\alpha-\beta}\bigl(B(\alpha)-B(\beta)\bigr)
	= D^\ast - Q^\ast =: \overline{L}.
	\]
	The $(2b-1)\times(2b-1)$ matrix $\overline{L}$ is therefore the quotient of the Laplacian $L(G)$ with respect to
	the partition $\{V_i\}$. With the description of $Q^\ast$ and $d_i$ from Theorem~\ref{spectra thm even}, we have
	\[
	\overline{L}_{ij} =
	\begin{cases}
		d_i, & i=j,\ 1\le i\le b-1,\\
		d_i-(n_i-1)=p^{ i}-1-n_i, & i=j,\ b\le i\le 2b-1,\\
		- n_j, & i\ne j,\ i+j\ge 2b,\\
		0, & i\ne j,\ i+j<2b.
	\end{cases}
	\]
In explicit matrix form, $\overline{L}$ is the $(2b-1)\times(2b-1)$ matrix
\[
\overline{L}=
\begin{pmatrix}
	d_1 & 0   & 0   & \cdots & 0         & 0         & -n_{2b-1} \\[3pt]
	0   & d_2 & 0   & \cdots & 0         & -n_{2b-2} & -n_{2b-1} \\[3pt]
	0   & 0   & d_3 & \cdots & -n_{2b-3} & -n_{2b-2} & -n_{2b-1} \\[3pt]
	\vdots & \vdots & \vdots & \ddots & \vdots    & \vdots    & \vdots \\[3pt]
	0   & 0   & -n_3 & \cdots &
	p^{ 2b-3}-1-n_{2b-3} & -n_{2b-2} & -n_{2b-1} \\[3pt]
	0   & -n_2 & -n_3 & \cdots & -n_{2b-3} &
	p^{ 2b-2}-1-n_{2b-2} & -n_{2b-1} \\[3pt]
	-n_1 & -n_2 & -n_3 & \cdots & -n_{2b-3} & -n_{2b-2} &
	p^{ 2b-1}-1-n_{2b-1}
\end{pmatrix}.
\]
Now, we show that see that $\{0\}$ along with $p^{i}-1$ are the eigenvalue of $\overline{L}$ for $1\leq i\leq 2b-1$ except $i=b.$

	We let $N=2b-1$ and recall that $n_j=(p-1)p^{N-j}$. So, with 	$1\le a\le b\le N$, w ehave
	\begin{align}
		\sum_{j=a}^{b} n_j
		&= (p-1)\sum_{j=a}^{b} p^{N-j}
		= (p-1)\sum_{t=N-b}^{N-a} p^t \notag\\
		&= (p-1) \frac{p^{N-a+1}-p^{N-b}}{p-1}
		= p^{N-a+1}-p^{N-b}. \label{eq:general-sum}
	\end{align}
	In particular, for $r\ge1$, we have
	\begin{equation}\label{eq:tail-sum}
		\sum_{j=N-r+1}^{N} n_j = p^r-1.
	\end{equation}
	Due to positive semidefinite property of the Laplacian of $ \varGamma(R)$, $0$ is the simple eigenvalue of $\overline{L}$. Next, for eigenvalues $p^k-1$ with $1\le k\le b-1$, we fix an integer $k$,  $1\le k\le b-1$ and let $
	\lambda_k = p^k-1.$ Now, consider a vector  $v^{(k)}\in\mathbb{R}^N$ with 
	\begin{equation}\label{eq:v-k}
		v^{(k)}_j =
		\begin{cases}
			0, & 1\le j<k,\\
			- \varGamma_k, & j=k,\\
			1, & k<j\le N-k,\\
			0, & N-k<j\le N,
		\end{cases}
	\end{equation}
	where $ \varGamma_k$ is a scaler, we are to be chosen so that eigenequation is satisfied. For the computation of $(\overline{L}v^{(k)})_i$, we consider the following cases. For  $1\le i<k$, we have $v_i^{(k)}=0$. If $j$ is such that $v_j^{(k)}\ne0$, then 
	$k\le j\le N-k$, so it gives
	\[
	i+j\le (k-1)+(N-k) = N-1 < N+1.
	\]
	Hence $i+j\ge N+1$ never holds, so all off-diagonal entries in row $i$ of $\overline{L}$
	multiply zeros of $v^{(k)}$. Thus, we have
	\[
	(\overline{L}v^{(k)})_i = d_i v^{(k)}_i = 0 = \lambda_k v^{(k)}_i.
	\]
	For $i=k$, we have $v^{(k)}_k=- \varGamma_k$. For $j$ in the support of $v^{(k)}$ with
	$j\ne k$, we have $k<j\le N-k$, so it gives
	\[
	k+j \le k+(N-k) = N < N+1.
	\]
	Thus $k+j\ge N+1$ never holds, and there are no nonzero off-diagonal
	contributions in row $k$. So, $(\overline{L}v^{(k)})_k = d_k v^{(k)}_k.$
	Since $1\le k\le b-1$, we have $d_k=p^k-1=\lambda_k$, and we obtain 
	\[
	(\overline{L}v^{(k)})_k = (p^k-1)v^{(k)}_k
	= \lambda_k v^{(k)}_k,
	\]
	which is independent of $ \varGamma_k$. If $k<i\le N-k$, then in this range $v^{(k)}_i=1$. The only indices $j$ with $v_j^{(k)}\ne 0$	are $j=k$ and $k<j\le N-k$. However, the column $k$ does not appear in the off-diagonal part of row $i$. 	Indeed, for $j=k$, we obtain
	\[
	i+k \le (N-k)+k = N < N+1.
	\]
	So, $i+k\ge N+1$ fails and it gives $(\overline{L})_{ik}=0$. Thus $ \varGamma_k$ plays
	no role in this range. The nonzero off-diagonal terms come from those
	$j$ with $k<j\le N-k, $ with $ i+j\ge N+1.$
	The inequality $i+j\ge N+1$ is $j\ge N+1-i$, so the relevant $j$ are
	\[
	j\in\bigl[\max(k+1,N+1-i), N-k\bigr].
	\]
	Since $i\le N-k$, we get $N+1-i \ge N+1-(N-k)=k+1,$ 	so the lower bound is $j\ge N+1-i$. In summary, the contributing
	indices are
	\[
	j = N+1-i,N+2-i,\dots,N-k.
	\]
	These all satisfy $j>k$, hence $v^{(k)}_j=1$. 	If $i<b$ then $N+1-i > i$, so $j=i$ is not in this interval and the
	sum includes no diagonal index. If $i\ge b$ then $2i\ge 2b=N+1$, so
	$N+1-i \le i$, and $j=i$ \emph{is} in the interval. In this case we
	must omit $j=i$ from the off-diagonal sum. Both these cases can be treated
	uniformly by writing
	\[
	(\overline{L}v^{(k)})_i
	= d_i - \sum_{j=N+1-i}^{N-k} n_j + \varepsilon_i n_i,
	\]
	where $\varepsilon_i=0$ if $i<b$ and $\varepsilon_i=1$ if $i\ge b$. By \eqref{eq:general-sum} (with $a=N+1-i$, $b=N-k$),
	\[
	\sum_{j=N+1-i}^{N-k} n_j
	= p^{N-(N+1-i)+1}-p^{N-(N-k)}
	= p^i-p^k.
	\]
	
	If $k<i\le b-1$, then $d_i=p^i-1$ and $\varepsilon_i=0$.
	Thus, we have
	\[
	(\overline{L}v^{(k)})_i
	= (p^i-1) - (p^i-p^k)
	= p^k-1
	= \lambda_k v^{(k)}_i.
	\]
	If $b\le i\le N-k$, then $d_i=p^i-1-n_i$ and
	$\varepsilon_i=1$, so we obtain 
	\[
	(\overline{L}v^{(k)})_i
	= (p^i-1-n_i) - (p^i-p^k) + n_i
	= p^k-1
	= \lambda_k v^{(k)}_i.
	\]
	Thus the eigenvalue equation holds for all $k<i\le N-k$ for every
	choice of $ \varGamma_k$. With $N-k < i\le N$, here $v^{(k)}_i=0$, so we need $(\overline{L}v^{(k)})_i=0$.
	Now $v^{(k)}_j\ne 0$ only for $k\le j\le N-k$. Since
	$i\ge N-k+1$, for any such $j$ we have
	\[
	i+j \ge (N-k+1)+k = N+1.
	\]
	So, all of these $j$ appear in the off-diagonal sum. Therefore, we have
	\[
	(\overline{L}v^{(k)})_i
	= -\sum_{j=k}^{N-k} n_j v^{(k)}_j
	= -\Bigl(n_k(- \varGamma_k) + \sum_{j=k+1}^{N-k} n_j\cdot1\Bigr)
	=  \varGamma_k n_k - \sum_{j=k+1}^{N-k} n_j.
	\]
	This expression is independent of $i$ in this range. Thus all the
	bottom rows impose the same scalar condition $	 \varGamma_k n_k = \sum_{j=k+1}^{N-k} n_j.$
	In order to  solve  $ \varGamma_k$ explicitly , we use \eqref{eq:general-sum},
	\[
	\sum_{j=k+1}^{N-k} n_j
	= \sum_{j=k}^{N-k} n_j - n_k
	= \bigl(p^{N-k+1}-p^{N-(N-k)}\bigr) - n_k
	= (p^{N-k+1}-p^k) - n_k.
	\]
	But $n_k=(p-1)p^{N-k}$, so $	p^{N-k+1}-p^k-n_k = p^{N-k}-p^k.$
	Hence, with these facts, we obtain $	\sum_{j=k+1}^{N-k} n_j = p^{N-k}-p^k.$ Also, $n_k = (p-1)p^{N-k}$, so we have
	\[
	 \varGamma_k
	= \frac{\sum_{j=k+1}^{N-k} n_j}{n_k}
	= \frac{p^{N-k}-p^k}{(p-1)p^{N-k}}
	= \frac{1-p^{2k-N}}{p-1}
	= \frac{1-p^{ 2k-2b+1}}{p-1}.
	\]
	With the above value of $ \varGamma_k$, all bottom rows give $(\overline{L}v^{(k)})_i=0=\lambda_k v^{(k)}_i$. Therefore, we have 
	\[
	\overline{L}v^{(k)} = \lambda_k v^{(k)},\qquad
	\lambda_k=p^k-1,\quad 1\le k\le b-1.
	\]
	
	\medskip
	
	Now, for the eigenvalues $p^k-1$ of $\overline{L}$ for $b+1\le k\le N$, we fix an integer $k$ with $b+1\le k\le N$ and set again	$\lambda_k=p^k-1$. For $m = N+1-k = 2b-k,$ with $1\le m\le b-1<k$, we define $w^{(k)}\in\mathbb{R}^N$ by
	\begin{equation}\label{eq:w-k}
		w^{(k)}_j =
		\begin{cases}
			0,          & 1\le j<m,\\
			-\delta_k,  & m\le j\le k-1,\\
			1,          & j=k,\\
			0,          & k<j\le N,
		\end{cases}
	\end{equation}
	where $\delta_k$ is the  scalar, which we are required to find that $w^{(k)}$ is eigenvector of some eigenvalue of $\overline{L}$. Again we compute $(\overline{L}w^{(k)})_i$ by different ranges over $i$. For $1\le i<m$, we have $w^{(k)}_i=0$. For any $j$ with $w^{(k)}_j\ne0$, we have$m\le j\le k$, and thus
	\[
	i+j \le (m-1)+k = (N+1-k-1)+k = N.
	\]
	Hence $i+j\ge N+1$ never holds, so the off-diagonal sum of $\overline{L}$ is empty,  and we obtain
	\[
	(\overline{L}w^{(k)})_i = d_i w^{(k)}_i = 0 = \lambda_k w^{(k)}_i.
	\]
	For $i=k$, we have $w^{(k)}_k=1$. The only nonzero $w^{(k)}_j$ with $j\ne k$ occur
	for $m\le j\le k-1$, and for all such $j$,  we have
	\[
	k+j \ge k+m = k+(N+1-k)=N+1,
	\]
	and they all contribute to the off-diagonal sum. Thus, we obtain
	\[
	(\overline{L}w^{(k)})_k
	= d_k\cdot 1 - \sum_{j=m}^{k-1} n_j(-\delta_k)
	= d_k + \delta_k\sum_{j=m}^{k-1} n_j.
	\]
	As $k\ge b+1$, so $k\ge b$ and hence $d_k=p^k-1-n_k$. In order for $(\overline{L}w^{(k)})_k=\lambda_k w^{(k)}_k=p^k-1$ to hold, so we must have
	\[
	p^k-1-n_k + \delta_k\sum_{j=m}^{k-1}n_j = p^k-1,
	\]
	which is equivalent to $\delta_k\sum_{j=m}^{k-1}n_j = n_k.$
	Using \eqref{eq:general-sum} with $a=m$, $b=k-1$, we obtain
	\[
	\sum_{j=m}^{k-1} n_j
	= p^{N-m+1}-p^{N-(k-1)}
	= p^{k}-p^{N-k+1}.
	\]
	Thus
	\[
	\delta_k
	= \frac{n_k}{p^k-p^{N-k+1}}
	= \frac{(p-1)p^{N-k}}{p^k-p^{N-k+1}}= \frac{p-1}{p^{2k-N}-p}
	= \frac{p-1}{p\bigl(p^{2(k-b)}-1\bigr)}.
	\]
	With the choice of $\delta_k$, for $m\le i\le k-1$, we have  $w^{(k)}_i=-\delta_k$. The nonzero $w^{(k)}_j$ occur for $j\in[m,k-1]$ (where $w_j=-\delta_k$) and $j=k$ (where $w_k=1$). With	$m\le i\le k-1$, we have $i+k \ge m+k = N+1,$	so $j=k$ always appears in the off-diagonal sum. Among $m\le j\le k-1$, the condition $i+j\ge N+1$ is $j\ge N+1-i$. Since
	\[
	N+1-i \ge N+1-(k-1) = N-k+2 = m+1,
	\]
	we have $N+1-i \ge m$ and the contributing $j$ satisfy $j\in [N+1-i,k-1].$ 	We must again check whether $i$ lies in this interval. If $m\le i<b$, then  $N+1-i > i$, and it shows that  $i$ is not in $[N+1-i,k-1]$. In this case
	\[
	(\overline{L}w^{(k)})_i
	= d_i(-\delta_k) - \sum_{j=N+1-i}^{k-1} n_j(-\delta_k) - n_k\cdot 1
	= -\delta_k d_i + \delta_k\sum_{j=N+1-i}^{k-1} n_j - n_k.
	\]
	Using \eqref{eq:general-sum} with $a=N+1-i$, $b=k-1$, we have
	\[
	\sum_{j=N+1-i}^{k-1} n_j
	= p^{N-(N+1-i)+1}-p^{N-(k-1)}
	= p^{i}-p^{N-k+1}.
	\]
	Since $i<b$, $d_i=p^i-1$, so
	\begin{align*}
		(\overline{L}w^{(k)})_i
		&= -\delta_k(p^i-1) + \delta_k(p^i-p^{N-k+1}) - n_k= -\delta_k(p^{N-k+1}-1) - n_k.
	\end{align*}
	On the other hand, we have
	\[
	\lambda_k w^{(k)}_i = (p^k-1)(-\delta_k) = -\delta_k(p^k-1).
	\]
	Thus the eigenvalue equation is equivalent to
	\[
	-\delta_k(p^{N-k+1}-1) - n_k = -\delta_k(p^k-1)
	\]
	or
	\[
	\delta_k\bigl(p^k-p^{N-k+1}\bigr) = n_k.
	\]
	But this is precisely how $\delta_k$ was chosen above. Hence
	$(\overline{L}w^{(k)})_i=\lambda_k w^{(k)}_i$ for $m\le i<b$. Now, for $b\le i\le k-1$, we get $2i\ge 2b=N+1$. So, $N+1-i\le i$ and the interval
	$[N+1-i,k-1]$ contains $i$. Thus, we have
	\[
	\sum_{\substack{m\le j\le k-1\\ i+j\ge N+1}} n_j(-\delta_k)
	= -\delta_k\left(\sum_{j=N+1-i}^{k-1} n_j - n_i\right),
	\]
	and
	\[
	(\overline{L}w^{(k)})_i
	= d_i(-\delta_k) -\Bigl(-\delta_k\bigl(\sum_{j=N+1-i}^{k-1} n_j - n_i\bigr)\Bigr)
	- n_k
	= -\delta_k d_i + \delta_k\sum_{j=N+1-i}^{k-1}n_j - \delta_k n_i - n_k.
	\]
	Again with $\sum_{j=N+1-i}^{k-1}n_j=p^i-p^{N-k+1}$ and
	$d_i=p^i-1-n_i$, we obtain
	\begin{align*}
		(\overline{L}w^{(k)})_i
		&= -\delta_k(p^i-1-n_i) + \delta_k(p^i-p^{N-k+1}) - \delta_k n_i - n_k= \delta_k - \delta_k p^{N-k+1} - n_k\\
		&= -\delta_k(p^{N-k+1}-1) - n_k.
	\end{align*}
	As in the previous case this equals $-\delta_k(p^k-1)$ exactly when
	$\delta_k(p^k-p^{N-k+1})=n_k$, that is, for our $\delta_k$, and hence for $b\le i\le k-1$, we obtain
	\[
	(\overline{L}w^{(k)})_i = \lambda_k w^{(k)}_i.
	\]
	For the last range $i>k$, we have $w^{(k)}_i=0$. For such $i$ and with any $j\in[m,k]$, we have
	\[
	i+j \ge (k+1)+m = (k+1)+(N+1-k)=N+2>N+1.
	\]
	So, all indices in the support $\{m,\dots,k\}$ contribute to the off-diagonal sum. Thus, with the defining relation for $\delta_k$, we obtain
	\begin{align*}
		(\overline{L}w^{(k)})_i
		&= -\sum_{j=m}^{k-1} n_j(-\delta_k) - n_k\cdot1\\
		&= \delta_k\sum_{j=m}^{k-1}n_j - n_k
		= \delta_k(p^k-p^{N-k+1}) - n_k
		= 0.
	\end{align*}
	Since $\lambda_k w^{(k)}_i=0$
	as well, the eigenvalue equation holds. Thus,  it shows that for $b+1\le k\le N$,
	\[
	\overline{L}w^{(k)} = (p^k-1)w^{(k)}.
	\]
	The numbers $p^k-1$ are strictly increasing in $k$, so the
	$2b-2$ values $\{p^k-1:1\le k\le N,\ k\ne b\}$ are all distinct, and
	together with $0$ they give $2b-1=N$ distinct eigenvalues. Thus, for every $i$ with $1\le i\le 2b-1$ and $i\neq b$,
	$p^i-1$ is an eigenvalue of $\overline{L}$.
\end{proof}

\section{Spectra of zero-divisor graph of $\mathbb{Z}_p[x]/\langle x^{2b+1}\rangle $}\label{section 3}
In this section, we consider the $A_{\alpha}$ spectra of the zero-divisor graph of $\mathbb{Z}_p[x]/\langle x^{2b+1}\rangle $. First, we understand the combinatorial structure of $  \varGamma(\mathbb{Z}_p[x]/\langle x^{2b+1}\rangle).$
\begin{theorem} \label{odd case}
	Let $p$ be a prime and let $b\ge 1$ be an integer, and let $ \varGamma(R)$ be the zero-divisor graph of $R=\mathbb{Z}_p[x]/\langle x^{2b+1}\rangle \cong \mathbb{F}_p[x]/(x^{2b+1})$. Then the following hold
	\begin{enumerate}
		\item The vertex set $V( \varGamma(R))$ can be decomposed into subsets $V_i
		= \left\{
		\sum_{k=i}^{2b} a_k x^k : a_i \neq 0
		\right\},  1\le i \le 2b$ with $ 	|V_i|
		= (p-1)p^{ 2b-i},  i=1,\dots,2b,$ and each vertex of $V_{i}$ is adjacent to every vertex of $V_{j}$ if and only if $i+j\geq 2b+1.$
		\item The induced subgraph of $V_{i}$ is $\overline{K}_{|V_{i}|}$ if $1\leq i\leq b$ and it is $K_{|V_{i}|}$ if $b+1\leq i\leq 2b.$
		\item The clique number of $ \varGamma(R)$ is $\omega( \varGamma(R))=p^{b}$, the independence number is $p^{2b} - p^b$, and the domination number is $ \varGamma( \varGamma(R))=1$.
		\item The diameter of $ \varGamma(R)$ is $2$ and its girth is $3.$ 
	\end{enumerate}
\end{theorem}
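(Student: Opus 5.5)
The plan is to mirror the proof of Theorem \ref{even case}, replacing $2b$ by $2b+1$ throughout.

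\textbf{Part (1).} First record that $R=\mathbb{F}_p[x]/(x^{2b+1})$ is local with maximal ideal $(x)$, and that an element is a unit iff its constant term is nonzero. Hence $Z(R)\setminus\{0\}$ consists of the nonzero $\sum_{k=1}^{2b}a_kx^k$, so $|V(\varGamma(R))|=p^{2b}-1$.

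Partition by $\operatorname{mindeg}$. Fixing $a_i\neq 0$ and letting $a_{i+1},\dots,a_{2b}$ vary freely gives $|V_i|=(p-1)p^{2b-i}$.

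For the adjacency rule, take $f\in V_i$ and $g\in V_j$. The lowest term of $fg$ is $a_ib_jx^{i+j}$ with $a_ib_j\neq 0$, because $\mathbb{F}_p$ is a field. So $fg=0$ iff $i+j\ge 2b+1$, and adjacency between $V_i$ and $V_j$ is all-or-nothing.

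\textbf{Part (2).} Apply the rule with $i=j$: distinct vertices of $V_i$ are adjacent iff $2i\ge 2b+1$, i.e. $i\ge b+1$. Thus $V_i$ induces $\overline{K}_{|V_i|}$ for $i\le b$ and $K_{|V_i|}$ for $i\ge b+1$.

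\textbf{Part (3).} This is the step needing the most care, since the odd case behaves differently from the even one.

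\emph{Clique number.} The set $C=\bigcup_{k=b+1}^{2b}V_k$ is a clique, because $i,j\ge b+1$ gives $i+j\ge 2b+2$. Its size is $\sum_{k=b+1}^{2b}(p-1)p^{2b-k}=p^{b}-1$.

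This can be enlarged by one vertex. A vertex $v\in V_b$ is adjacent to all of $\bigcup_{j\ge b+1}V_j$, since $b+j\ge 2b+1$. So $C\cup\{v\}$ is a clique of size $p^b$.

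For the upper bound, let $K$ be any clique and $i_0$ the minimal $\operatorname{mindeg}$ on $K$.
\begin{itemize}
\item If $i_0\ge b+1$, then $|K|\le p^b-1$.
\item If $i_0\le b$, then $V_{i_0}$ is independent, so $|K\cap V_{i_0}|=1$. Every other vertex of $K$ lies in $\bigcup_{j\ge 2b+1-i_0}V_j$, which has size $p^{i_0}-1$. Hence $|K|\le p^{i_0}\le p^b$.
\end{itemize}
This gives $\omega(\varGamma(R))=p^b$. The tail-sum identity $\sum_{j=m}^{2b}|V_j|=p^{2b+1-m}-1$ is used here, exactly as in Theorem \ref{even case}.

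\emph{Independence number.} The set $L=\bigcup_{k=1}^{b}V_k$ is independent, since $i+j\le 2b$, and $|L|=(p^{2b}-1)-(p^b-1)=p^{2b}-p^b$. For the upper bound, every independent set $I$ meets the clique $C$ in at most one vertex. I expect this to be the main obstacle: a naive bound gives only $p^{2b}-p^b+1$, so an exchange argument is needed to recover the stated value.

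\begin{itemize}
\item Suppose $I$ contains $u\in V_j$ with $j\ge b+1$.
\item Then $I$ avoids all of $V_{2b+1-j},\dots,V_{2b}$. Of these, $V_{2b+1-j}\subseteq L$ is nonempty, so $I$ misses at least $|V_{2b+1-j}|\ge p-1\ge 1$ vertices of $L$.
\item Therefore $|I|\le 1+|L|-1=|L|$.
\end{itemize}

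\emph{Domination number.} $V_{2b}=\{a x^{2b}: a\neq 0\}$ consists of $p-1$ universal vertices. Hence the domination number is $1$.

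\textbf{Part (4).} Connectivity and diameter at most $2$ follow from the universal vertices. Two vertices of $V_1$ are nonadjacent when $|V_1|\ge 2$, which holds whenever $2b\ge 2$ (the case $p=2$, $b=1$ needs a separate look), so the diameter is exactly $2$.

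For the girth, take two vertices of $V_{2b-1}\cup V_{2b}$ together with a universal vertex. For $b\ge 2$, $V_{2b-1}$ is a clique of size at least $2$ (or $V_{2b}$ has size $p-1\ge 2$), which yields a triangle. For $b=1$, $V_1\cup V_2$ in $\mathbb{F}_p[x]/(x^3)$ contains $x^2$ adjacent to everything together with the clique $V_2$.

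The small cases, such as $p=2$, $b=1$ where the graph is a path $P_3$, will be flagged exactly as the paper does for $b=1$ in Theorem \ref{even case}.
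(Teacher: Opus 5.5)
Your proposal is correct and follows the paper's proof essentially step for step. The shared steps are:
\begin{itemize}
\item the $\operatorname{mindeg}$ partition and the adjacency rule $i+j\ge 2b+1$;
\item the clique $\{v\}\cup\bigcup_{k\ge b+1}V_k$ with $v\in V_b$, bounded above via the minimal index $i_0$;
\item the independent set $\bigcup_{k\le b}V_k$, bounded above by noting that a vertex of $V_j$ with $j\ge b+1$ excludes the nonempty set $V_{2b+1-j}$ (the paper runs the same count with $j_0=\max j$);
\item the universal vertices in $V_{2b}$;
\item triangles from $V_{2b-1}\cup V_{2b}$, with $p=2$, $b=1$ flagged as the exception.
\end{itemize}
Your caveat on the diameter is unnecessary: $|V_1|=(p-1)p^{2b-1}\ge 2$ always, including $p=2$, $b=1$, where $V_1=\{x,\,x+x^2\}$. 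So a nonadjacent pair in $V_1$ exists in every case.
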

\begin{proof}
We use the description of $R$ as an $\mathbb{F}_p$--vector space, and the characterize the
		units by the constant term in the quotient rings $\mathbb{F}_p[x]/(x^{2b+1})$.	Since $R$ is finite, every nonunit is a zero-divisor, so the nonzero zero-divisors are precisely the nonzero elements of the ideal $(x)$. Thus, as an $\mathbb{F}_p$--vector space, and with multiplication modulo $x^{2b+1}$, we have
		\[
		R = \Bigl\{\sum_{k=0}^{2b} a_k x^k : a_k \in \mathbb{F}_p\Bigr\}.
		\]
		 So, it follows that all terms of degree greater than or equal to $2b+1$ vanish.	An element $\sum_{k=0}^{2b} a_k x^k$ of $R$ is a unit if and only if $a_0\neq 0$. Thus, the set of nonzero zero-divisors of $R$ is
		\[
		Z(R)\setminus\{0\}
		= \Bigl\{\sum_{k=1}^{2b} a_k x^k : (a_1,\dots,a_{2b}) \neq (0,\dots,0)\Bigr\},
		\]
		which are precisely the nonzero elements of the ideal $(x)$. So, it follows that $|V( \varGamma(R))|
		= |Z(R)\setminus\{0\}|
		= p^{ 2b} - 1.$ For a nonzero zero-divisor $f \in Z(R)$,  the minimal degree is defined as
		\[
		\operatorname{mindeg}(f)
		= \min\bigl\{k \in \{1,\dots,2b\} : \text{the coefficient of $x^k$ in $f$ is nonzero}\bigr\}.
		\]
		This partitions the vertex set $V( \varGamma(R)) = Z(R)\setminus\{0\}$ into 
		$$V( \varGamma(R)) = V_1 \cup V_2 \cup \dots V_{b-1}\cup V_{b}\cup V_{b+1}\cup \dots \cup V_{2b},$$
		where
		\[
		V_i= \left\{\sum_{k=i}^{2b} a_k x^k : a_i \neq 0
		\right\}, \qquad 1\le i \le 2b.
		\]
		For each $i$, the coefficient $a_i$ can be any nonzero element of $\mathbb{F}_p$,
		and the coefficients $a_{i+1},\dots,a_{2b}$ are arbitrary. Hence $	|V_i|
		= (p-1)p^{ 2b-i}, $ for $ i=1,\dots,2b,$		and we have
		\[
		\sum_{i=1}^{2b} |V_i|
		= (p-1)\sum_{t=0}^{2b-1} p^t
		= p^{ 2b}-1
		= |V( \varGamma(R))|.
		\]
		Let $f,g\in Z(R)\setminus\{0\}$ with $\operatorname{mindeg}(f)=i$ and
		$\operatorname{mindeg}(g)=j$, where $1\le i,j\le 2b$. Write
		\[
		f = a_i x^i + \text{(higher degree terms)}, \qquad
		g = b_j x^j + \text{(higher degree terms)},
		\]
		with $a_i,b_j \in \mathbb{F}_p^\times$. In  $fg$, the term of minimal degree is $x^{i+j}$, whose coefficient is $a_i b_j$,
		since all other products of coefficients have strictly larger degree.
		Thus, it implies that	$fg = 0  $ if and only if $ i+j \ge 2b+1.$	Clearly, if $i+j \le 2b$, then the $x^{i+j}$ term is nonzero in $R$, and  if $i+j\ge 2b+1$,	then every term of $fg$ has degree $\ge 2b+1$ and vanishes modulo $(x^{2b+1})$. Consequently, the adjacency between two vertices in $ \varGamma(R)$ is completely determined by the minimal degrees, that is, $f \in V_i,\ g\in V_j,\ f\neq g$ then $f\sim g$ if and only if $i+j \ge 2b+1.$ In particular, for the induced subgraphs on the sets $V_i$, two distinct vertices in $V_i$ are adjacent if and only if $2i\ge 2b+1$. Since $2i$ is an integer, this is equivalent to $i\ge b+1$. Thus $V_i$  is an independent set if $1\le i \le b$, and $V_{i}$ is a clique  if  $b+1\le i \le 2b.$ For $i\neq j$, either all vertices of $V_i$ are adjacent to every vertex of $V_j$, or there exist no edges between them. In particular, no vertex of $V_i $ is adjacent to any $V_j,$ if $i+j \le 2b$, and each vertex of $V_{i}$ is adjacent to every vertex of $V_{j}$ if $i+j \ge 2b+1.$
		
		A particularly important subgraph is given by the subset $C = \bigcup_{k=b+1}^{2b} V_k.$ 
		So, for all $i,j\ge b+1$, it gives us $ i+j \ge (b+1)+(b+1) = 2b+2 > 2b+1.$ Thus, every pair of vertices in $C$ is adjacent, and  $C$ induces a clique of size
		\[
		|C|
		= \sum_{k=b+1}^{2b} |V_k|
		= \sum_{k=b+1}^{2b} (p-1)p^{ 2b-k}
		= (p-1)\sum_{t=0}^{b-1} p^t
		= p^{ b} - 1.
		\]
		Moreover, every vertex in $V_b$ is adjacent to every vertex in $C$, since $b + k \ge b+(b+1)=2b+1 $ for all $k\ge b+1.$	Thus, any set of the form $\{v\}\cup C$ with $v\in V_b$ is a clique of size $	|C|+1 = (p^b - 1)+1 = p^b.$ To identify the clique number, from above we note that  $\omega( \varGamma(R))\ge p^b$.	On the other hand, let $S$ be any clique and let $i_0=\min\{\operatorname{mindeg}(f): f\in S\}$.	If $i_0\ge b+1$, then $S\subseteq C$, so $|S|\le |C|=p^b-1$.
		If $i_0\le b$, then for any $g\in S$ with $\operatorname{mindeg}(g)=j$, we must have
		$i_0+j\ge 2b+1$, so $j\ge 2b+1-i_0\ge b+1$, and hence $S\subseteq \{f\}\cup C$ for some
		$f\in V_{i_0}$. Thus $|S|\le 1+|C|=p^b$, and it follows that $\omega( \varGamma(R))=p^b$.
		For $1\le i\le b$, each $V_i$ is independent,and if $1\le i,j\le b$ then $i+j \le b+b = 2b < 2b+1,$
		so there are no edges between $V_i$ and $V_j$ either. Thus, $S_0= \bigcup_{i=1}^b V_i$ is an independent set with cardinality 
		\[
		|S_0|
		= \sum_{i=1}^b |V_i|
		= \sum_{i=1}^b (p-1)p^{2b-i}
		= (p-1)\sum_{t=b}^{2b-1} p^t
		= (p-1)\frac{p^{2b}-p^b}{p-1}
		= p^{2b} - p^b.
		\]
		So, it follows that	$\alpha( \varGamma(R))\ge p^{2b} - p^b.$  We now show that no independent set can be larger than $S_0$. Let $S$ be an arbitrary independent set in $ \varGamma(R)$. If $S \cap \bigcup_{i=b+1}^{2b} V_i = \varnothing$, then $S \subseteq \bigcup_{i=1}^b V_i = S_0$, so $|S| \le |S_0|$. If $S$ contains a vertex from some $V_j$ with $j\ge b+1$.	Since $V_j$ is a clique for $j\ge b+1$, $S$ can contain at most one vertex from each such $V_j$,	and in particular, at most one from the entire set $\bigcup_{i=b+1}^{2b}V_i$. Let $	j_0 = \max\{j : S\cap V_j \neq \varnothing\},  $ with $ j_0\ge b+1.$ Then $S$ contains exactly one vertex from $V_{j_0}$, and no vertices from $V_j$ with $j>j_0$. Now, if $v\in V_{j_0}$, then for a vertex $u\in V_i$ with $i+j_0\ge 2b+1$ we have $u\sim v$,	so $u$ cannot belong to $S$. Thus, $S$ can contain vertices from $V_i$ only when $i + j_0 \le 2b  $, if $ i \le 2b - j_0.$	Consequently, $
		S \subseteq \left(\bigcup_{i=1}^{2b-j_0} V_i\right) \cup (S\cap V_{j_0}),$ 
		and hence	$|S| \le \sum_{i=1}^{2b-j_0} |V_i| + 1.$ 
		Comparing it with $|S_0|$, we have
		\[
		|S_0| - |S|\ge\sum_{i=1}^b |V_i| - \left(\sum_{i=1}^{2b-j_0} |V_i| + 1\right)
		=
		\sum_{i=2b-j_0+1}^{b} |V_i| - 1.
		\]
		Since $j_0\ge b+1$, we have $2b-j_0+1 \le b$, so the index set
		$\{2b-j_0+1,\dots,b\}$ is nonempty. Moreover, for each $i$, $|V_i| = (p-1)p^{2b-i} \ge p-1 \ge 1,$ so it gives $\sum_{i=2b-j_0+1}^{b} |V_i| \ge 1,$	and hence $|S_0|-|S|\ge 0$, or $|S|\le |S_0|$. Combining the two cases, every independent set $S$ satisfies $|S|\le |S_0|$, and we already	have an independent set $S_0$ of size $p^{2b}-p^b$. Therefore, we obtain $\alpha( \varGamma(R)) = |S_0| = p^{2b} - p^b.$
		
		As for $i\ge b+1$ the induced subgraph on $V_i$ is a clique,	and for $i=2b, \deg(v) = p^{ 2b} - 2,  $ for $ v\in V_{2b},$	while $|V( \varGamma(R))| = p^{ 2b} - 1$. Thus, each vertex $v\in V_{2b}$ is adjacent to every
		other vertex of $ \varGamma(R)$, that is, every vertex in $V_{2b}$ is a universal vertex. 

		For $v\in V_i$, its neighbors are exactly the vertices in $\bigcup_{j: i+j\ge 2b+1} V_j$,
		with the additional edges inside $V_{i}$, if $2i\ge 2b+1$ or  $i\ge b+1$, then in this case $v$ is also adjacent
		to all other vertices in $V_i$. With $\sum_{j=m}^{2b} |V_j| = p^{ 2b-m+1}-1 $,  for $1\le m\le 2b,$	and taking $m=2b+1-i$, we get the total number of vertices in subsets $V_{2b+1-i},\dots,V_{2b}$ as $\sum_{j=2b+1-i}^{2b} |V_j|= p^{ i}-1.$
		This includes $V_i$ itself exactly when $2i\ge 2b+1$.	Therefore, the degree of $v\in V_{i}$ is $\deg(v)=p^{ i}-1$ if  $1\le i\le b$, and $\deg(v)p^{ i}-2,$  if $b+1\le i\le 2b$. In particular, for $i=2b$, we have $2(2b)\ge 2b+1$, so each vertex in
		$V_{2b}$ has degree $\deg(v) = p^{ 2b} - 2, v\in V_{2b},$ and is adjacent to every other vertex of $ \varGamma(R)$.
		Thus $V_{2b}$ consists of $p-1$ universal vertices. As $V_{2b}$ is nonempty with $p-1$ vertices, and every vertex is adjacent
			to every vertex of $V_{2b}$, the graph $ \varGamma(R)$ is connected for all $b\ge 1$. 
			Moreover, if $b=1$, so $a=3$, then there are two subsets $V_1$ an independent set, $V_2$ a clique, and every vertex in $V_1$ is adjacent to every vertex in $V_2$. Thus, the diameter of $ \varGamma(R)$	is $2$. If $b\ge 2$, so $a\ge 5$, there are nonadjacent pairs in $V_1$, but any two vertices are at distance at most $2$ via a vertex in $V_{2b}$. Hence, for all $b\ge 1$, the diameter of $ \varGamma(R)$ is $2$. For $b\ge 2$, the sets $V_{2b-1}$ and $V_{2b}$ satisfy $2(2b-1)\ge 2b+1,\quad (2b-1)+(2b)\ge 2b+1,$		so $V_{2b-1}$ is a clique and every vertex in $V_{2b-1}$ is adjacent to every vertex in $V_{2b}$. Since $|V_{2b-1}|\ge 2$ and $|V_{2b}|\ge 1$, this yields triangles, so for all primes $p$ and all $b\ge 2$, the girth of $ \varGamma(R)$ is $3$. Here, we ignore the case $a=3$ and $p=2$,  as $ \varGamma(R)$ has no cycles (girth infinite).
\end{proof}

We will illustrate Theorem \ref{odd case} for $b=2$, so that $a=5.$\\
	Let $p$ be a prime and let $R= \mathbb{Z}_p[x]/\langle x^5\rangle \cong \mathbb{F}_p[x]/(x^5).$	Let $ \varGamma(R)$ be its zero--divisor graph. Then  the vertices of $ \varGamma(R)$ are partitioned as $V( \varGamma(R)) = V_1 \cup V_2 \cup V_3 \cup V_4,$
	where
	\[
	\begin{aligned}
		V_1 &= \{a_1x + a_2x^2 + a_3x^3 + a_4x^4 : a_1 \neq 0\},
		& |V_1| &= p^4 - p^3,\\[2pt]
		V_2 &= \{a_2x^2 + a_3x^3 + a_4x^4 : a_2 \neq 0\},
		& |V_2| &= p^3 - p^2,\\[2pt]
		V_3 &= \{a_3x^3 + a_4x^4 : a_3 \neq 0\},
		& |V_3| &= p^2 - p,\\[2pt]
		V_4 &= \{a_4x^4 : a_4 \neq 0\},
		& |V_4| &= p-1.
	\end{aligned}
	\]
	Then, the order of $ \varGamma(R)$  is $p^4 - 1.$ The degree of a vertex $v\in V( \varGamma(R))$ is $
			\deg(v) = p-1, p^{2}-1, p^{3}-2, $ and $p^{4}-2 $ for $ v\in V_1,, v\in V_2, v\in V_3, $ and $ v\in V_4$, respectively. The set $V_3 \cup V_4$ induces a clique of size $|V_3|+|V_4| = p^2 - 1$, and for any
			vertex $v\in V_2$, the set $\{v\} \cup V_3 \cup V_4 $ is a clique of size $p^2$.  The sets $V_1$ and $V_2$ are independent, with no edges joining $V_1$ to $V_2$. Therefore, $V_1\cup V_2$ constitutes an independent set of size $\alpha\bigl( \varGamma(R)\bigr) = |V_1| + |V_2| = p^4 - p$.
	
For $p=2,$   every element of $R=\mathbb{F}_2[x]/(x^5)$ can be written uniquely as $a_0 + a_1x + a_2x^2 + a_3x^3 + a_4x^4,$ where $a_i\in\mathbb{F}_2=\{0,1\},$ with multiplication modulo $x^5$. The units are those with $a_0\neq 0$, so the nonzero zero-divisors are exactly the nonzero
multiples of $x$:
\[
Z(R)\setminus\{0\} = \{a_1x + a_2x^2 + a_3x^3 + a_4x^4 \neq 0\},
\]
and $|V( \varGamma(R))| = 2^4 - 1 = 15.$ For a nonzero zero-divisor $f$, define
\[
\operatorname{mindeg}(f)
= \min\{k\in\{1,2,3,4\} : \text{the coefficient of }x^k\text{ in }f\text{ is nonzero}\}.
\]
This partitions $V( \varGamma(R))$ into four subsets $V_1,V_2,V_3,V_4$, where $V_1$ is set of elements of minimal degree $1$ (coefficient of $x$ is nonzero). Here $a_1=1$, and $a_2,a_3,a_4\in\{0,1\}$ arbitrary, so $|V_1|=2^3=8$, and 
	\[
	\begin{aligned}
		V_1 = \{&
		x,
		x+x^2,
		x+x^3,
		x+x^4,
		x+x^2+x^3,
		x+x^2+x^4,\\
		&x+x^3+x^4,
		x+x^2+x^3+x^4
		\}.
	\end{aligned}
	\]
	Similarly, $V_2$ consists of elements with a minimal degree of $2$, as the coefficient of $x$ is $0$ and the coefficient of $x^2$ is non-zero. Let $a_1=0$, $a_2=1$, and $a_3,a_4\in\{0,1\}$. 
	Therefore, the size of \(V_2\) is \(2^2=4\), and \(V_2\) is defined as \(\{x^2, x^2+x^3, x^2+x^4, x^2+x^3+x^4\}\).
	Moreover, \(V_3\) includes all elements that have a minimum nonzero degree of \(3\).
	The coefficients of \(x\) and \(x^2\) must equal zero, but the coefficient of \(x^3\) must equal \(1\). The residual coefficient \(a_4\) can assume a value of either \(0\) or \(1\). Therefore, $V_3=\{x^3, x^3+x^4\}$. Moreover, \(V_4\) includes all elements with a minimum degree of \(4\). The coefficients of \(x\), \(x^2\), and \(x^3\) are all zero, while the coefficient of \(x^4\) is not zero.
	 Here $a_1=a_2=a_3=0$, $a_4=1$, so $|V_4|=1$, and 	$V_4 = \{x^4\}.$  Thus, $f,g$ are two nonzero zero-divisors with 
\[
\operatorname{mindeg}(f) = i,\quad \operatorname{mindeg}(g) = j,\quad i,j\in\{1,2,3,4\},
\]
then $fg=0  $ if $ i+j \ge 5.$
So vertices $f,g$ are adjacent in $ \varGamma(R)$ if and only if $i+j\ge 5$. From this, it follows that: $V_1$ is an independent set of order $8$, each vertex is  adjacent to $V_{4}=\{x^4\}$,  $V_2$ is an independent set of $4$ vertices, each joined to both vertices of $V_3$
	and to $x^4$, $V_3\cup V_4 = \{x^3,\ x^3+x^4,\ x^4\}$ induces a $K_3$. Also, there are no edges between $V_1$ and $V_2$, and no edges between $V_1$ and $V_3$.  The unique vertex $x^4\in V_4$ is adjacent to \emph{every} other vertex of $ \varGamma(R)$,
	so it is a universal vertex. The graph $ \varGamma(R)$ is shown in Figure \ref{fig 3}.
\begin{figure}[H]
	\centering
	\begin{tikzpicture}[
		scale=1.0,
		vertex/.style={circle, draw, inner sep=1.4pt},
		edge/.style={thin}
		]
		
		\node[vertex, fill=blue] (x1) at (-3, 3) {};
		\node[vertex, fill=blue] (x2) at (-2, 3) {};
		\node[vertex, fill=blue] (x3) at (-1, 3) {};
		\node[vertex, fill=blue] (x4) at ( 0, 3) {};
		\node[vertex, fill=blue] (x5) at ( 1, 3) {};
		\node[vertex, fill=blue] (x6) at ( 2, 3) {};
		\node[vertex, fill=blue] (x7) at ( 3, 3) {};
		\node[vertex, fill=blue] (x8) at ( 4, 3) {};
		
		\node[vertex, fill=green] (y1) at (-1, 1)   {};   
		\node[vertex, fill=red] (y2) at (-3, -1) {};   
		\node[vertex, fill=red] (y3) at ( 1, -.65) {};   
		
		\foreach \i/\j in {y1/y2,y1/y3,y2/y3}
		{
			\draw[edge] (\i) -- (\j);
		}
		
		\node[vertex, fill=pink] (z1) at (4,  2) {};
		\node[vertex, fill=pink] (z2) at (4,  1) {};
		\node[vertex, fill=pink] (z3) at (4, 0) {};
		\node[vertex, fill=pink] (z4) at (4, -1) {};
		
		\foreach \z in {z1,z2,z3,z4}
		{
			\foreach \y in {y1,y2,y3}
			{
				\draw[edge] (\z) -- (\y);
			}
		}
		
		\foreach \x in {x1,x2,x3,x4,x5,x6,x7,x8}
		{
			\draw[edge] (\x) -- (y1);
		}
		
	\end{tikzpicture}
	\caption{Zero--divisor graph $ \varGamma(R)$ for $R \cong \mathbb{Z}_2[x]/\langle x^5\rangle$.}
	\label{fig 3}
\end{figure}
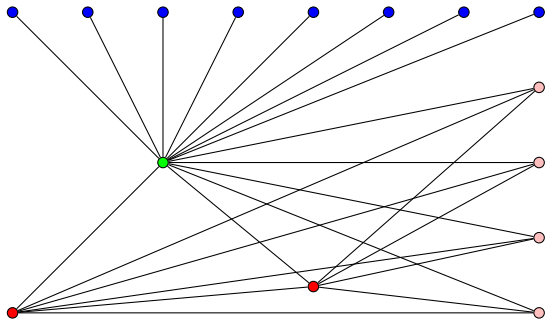

The following result gives the $A_\alpha$--spectrum of $ \varGamma\bigl(\mathbb{F}_p[x]/(x^{2b+1})\bigr)$. 
\begin{theorem}\label{spectra thm odd}
	Let $p$ be a prime and $b\ge 1$ a positive integer, and  let $ \varGamma(R)$ be the zero--divisor graph of $R = \mathbb{Z}_p[x]/\langle x^{2b+1}\rangle \cong \mathbb{F}_p[x]/(x^{2b+1}).$ Then the $A_\alpha$--spectrum of $ \varGamma(R)$ consist of the eigenvalue $ \alpha\bigl(p^{ i}-1\bigr),~ 1\le i\le b$, and the eigenvalue $ \alpha\bigl(p^{ i}-1\bigr) - 1,~b+1\le i\le 2b$ with multiplicity $n_i - 1 = (p-1)p^{ 2b-i} - 1.$ The remaining $2b$ eigenvalues are the eigenvalues of the following $2b\times 2b$ matrix $B=(B_{ij})_{1\le i,j\le 2b}$,
		where	$B = \alpha D^\ast + (1-\alpha)Q,$	with	$D^\ast = \operatorname{diag}(d_1,\dots,d_{2b})$
		and $Q=(Q_{ij})$ the quotient matrix of $A( \varGamma(R))$ with respect to the partition
		$V( \varGamma(R))=\bigcup_{i=1}^{2b}V_i$, given by
		\[
		Q_{ij} =
		\begin{cases}
			0, & \text{if } i=j,\ 1\le i\le b,\\[3pt]
			n_i - 1, & \text{if } i=j,\ b+1\le i\le 2b,\\[3pt]
			n_j, & \text{if } i\ne j,\ i+j\ge 2b+1,\\[3pt]
			0, & \text{if } i\ne j,\ i+j\le 2b.
		\end{cases}
		\]
		Equivalently,
		\[
		B_{ij} =
		\begin{cases}
			\alpha d_i, & \text{if } i=j,\ 1\le i\le b,\\[3pt]
			\alpha d_i + (1-\alpha)(n_i-1), & \text{if } i=j,\ b+1\le i\le 2b,\\[3pt]
			(1-\alpha)n_j, & \text{if } i\ne j,\ i+j\ge 2b+1,\\[3pt]
			0, & \text{if } i\ne j,\ i+j\le 2b,
		\end{cases}
		\]
		where	for 	$1\le i\le 2b$,
	\[
	d_i = \deg(v) =
	\begin{cases}
		p^{ i} - 1, & \text{if } 1\le i\le b,\\[3pt]
		p^{ i} - 2, & \text{if } b+1\le i\le 2b,
	\end{cases}
	\qquad v\in V_i.
	\]
\end{theorem}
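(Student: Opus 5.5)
The plan mirrors the proof of Theorem \ref{spectra thm even}, replacing the threshold $2b$ by $2b+1$ and the index range $1,\dots,2b-1$ by $1,\dots,2b$. By Theorem \ref{odd case}, $V(\varGamma(R))=\bigcup_{i=1}^{2b}V_i$ with $n_i=|V_i|=(p-1)p^{2b-i}$. For $f\in V_i$, $g\in V_j$ with $f\neq g$, we have $f\sim g$ if and only if $i+j\ge 2b+1$. Moreover, $V_i$ is independent for $i\le b$ and a clique for $i\ge b+1$. The first step records the degrees. The identity $\sum_{j=2b+1-i}^{2b}n_j=p^i-1$ (already obtained in the proof of Theorem \ref{odd case}) counts the vertices in $V_{2b+1-i}\cup\dots\cup V_{2b}$. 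Subtracting the vertex itself exactly when $i\ge b+1$ gives $d_i=p^i-1$ for $i\le b$ and $d_i=p^i-2$ for $i\ge b+1$.

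Next, for each $i$, I would consider the space $W_i$ of vectors supported on $V_i$ whose entries sum to zero; it has dimension $n_i-1$. Take $x\in W_i$. For $v\notin V_i$, $v$ is adjacent to either all or none of $V_i$, so $(Ax)_v$ is either $\sum_{u\in V_i}x_u=0$ or $0$. For $v\in V_i$, $(Ax)_v=0$ when $i\le b$, since $V_i$ is independent. When $i\ge b+1$, $V_i$ is a clique and $(Ax)_v=\sum_{u\in V_i,u\ne v}x_u=-x_v$. Also $Dx=d_ix$. Hence
\[
A_\alpha x=\alpha(p^i-1)x \quad (i\le b),
\]
and
\[
A_\alpha x=\bigl(\alpha(p^i-2)-(1-\alpha)\bigr)x=\bigl(\alpha(p^i-1)-1\bigr)x \quad (i\ge b+1).
\]
Since the $W_i$ have disjoint supports, their direct sum $W$ has dimension $\sum_{i=1}^{2b}(n_i-1)=p^{2b}-1-2b$. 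This yields the stated eigenvalues with multiplicities $n_i-1$.

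Then let $U_0$ be the space of vectors constant on each $V_i$; it has dimension $2b$ and $\mathbb{R}^{V}=W\oplus U_0$. The partition $\{V_i\}$ is equitable: a vertex of $V_i$ has $n_j$ neighbours in $V_j$ for $j\ne i$ with $i+j\ge 2b+1$, none when $i+j\le 2b$, $n_i-1$ in its own class when $i\ge b+1$, and $0$ in its own class when $i\le b$. This is exactly the matrix $Q$ in the statement. Consequently $U_0$ is $A$-invariant, and since $D$ is constant on each class, it is $A_\alpha$-invariant as well. In the basis $\mathbf{1}_{V_1},\dots,\mathbf{1}_{V_{2b}}$, the restriction of $A_\alpha$ to $U_0$ is $\alpha D^\ast+(1-\alpha)Q=B$. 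Its $2b$ eigenvalues, counted with multiplicity, complete the spectrum by the dimension count. The entrywise form of $B$ follows directly from the entries of $Q$.

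The only step needing care is the diagonal case split. With the odd threshold, $2i\ge 2b+1$ is equivalent to $i\ge b+1$, not $i\ge b$ as in the even case. This shift must be tracked consistently in $d_i$, in $Q_{ii}$, and in the $W_i$ eigenvalue computation. A related point is to note that $W$ and $U_0$ are complementary; this is clear because $U_0$ is the orthogonal complement of $W$. If $n_i=1$, which happens for $i=2b$ when $p=2$, then $W_i=0$ and that eigenvalue simply has multiplicity $0$, as in the even example.
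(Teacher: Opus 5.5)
Your proposal is correct and follows essentially the same route as the paper: you use the zero-sum subspaces $W_i$ on each class to get the eigenvalues $\alpha(p^i-1)$ and $\alpha(p^i-1)-1$ with multiplicities $n_i-1$, and the equitable partition to make the class-constant space $U_0$ invariant, with its restriction represented by $B$ in the indicator basis. Your remarks that $U_0=W^{\perp}$ and that the eigenvalues of $B$ should be counted with multiplicity (rather than asserted to be simple, as the paper does) are small improvements in rigor.
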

\begin{proof}
	For  $a=2b+1$, and by Theorem \ref{odd case}, the vertex set of $ \varGamma(R)$ is partitioned into $V_i$ ($1\le i\le 2b$)  with order $n_i = |V_i| = (p-1)p^{ 2b-i}.$
	By the adjacency rule, for	$f\in V_i,\ g\in V_j, $ with $f\ne g$, a vertex  $f$ is adjacent to $g$  if $i+j\ge 2b+1.$  For $1\le i\le b$, there are no edges inside $V_i$, and a vertex in $V_i$ is adjacent to all vertices in $V_j$ with $j\ge 2b+1-i$. With $\sum_{j=2b+1-i}^{2b} |V_j| = p^{ i}-1,$	we get $\deg(v)=p^{ i}-1$ for $v\in V_i$. For $b+1\le i\le 2b$, $V_i$ induces a clique, and a vertex in $V_i$ is adjacent to all other vertices in $V_i$ and to all vertices in $V_j$ for $j\ge 2b+1-i$, $j\ne i$. The same sum gives $p^{ i}-1$ neighbors in $V_{2b+1-i}\cup\cdots\cup V_{2b}$, which now includes $V_i$, so we subtract $1$ for the vertex itself and obtain $\deg(v) = p^{ i}-2$ for $v\in V_i$. 	For each $i=1,\dots,2b$, define
	\[
	W_i = \Bigl\{x\in\mathbb{R}^{V( \varGamma(R))} :
	\mathrm{supp}(x)\subseteq V_i,\ \sum_{v\in V_i} x_v = 0\Bigr\},
	\]
	where  for $x = (x_v)_{v \in V( \varGamma(R))} \in \mathbb{R}^{V( \varGamma(R))},$ we have $\operatorname{supp}(x)= \{  v \in V( \varGamma(R)) : x_v \neq 0  \}.$ 	Thus, it is clear that $\dim W_i = |V_i|-1 = n_i -1$. We show that $W_i$ is invariant under $A_\alpha$ and compute the eigenvalue. If $x\in W_i$, then $x$ is supported on $V_i$ and $\sum_{v\in V_i}x_v=0$. Consider the action of the adjacency matrix $A$ on independent sets  $V_i$, for $1\leq i\leq b.$ 	For $v\in V_i$, there are no neighbors of $v$ in $V_i$, and $x$ vanishes on all other levels,
	so $(Ax)_v=0$.
	For $v\in V_k$ with $k\ne i$, if $i+k\le 2b$, there are no neighbors of $v$ in $V_i$. If $i+k\ge 2b+1$, then $v$ is adjacent to all vertices of $V_i$, and on defining condition of $W_i$, we obtain $(Ax)_v = \sum_{u\in V_i} x_u = 0.$ Thus, $Ax=0$. Similarly, $Dx=d_i x = (p^{ i}-1)x$ on $V_i$. Consequently, we have 
	$$ A_\alpha x = \alpha Dx + (1-\alpha) = \alpha(p^{ i}-1)x. $$
	Thus, every nonzero vector in $W_i$ is an eigenvector with corresponding eigenvalue $\lambda_i = \alpha(p^{ i}-1),$ for $1\le i\le b.$	Consider the action of the matrix $A$  on clique $V_i$, for $b+1\leq i\leq 2b. $ For $v\in V_i$, the neighbors of $v$ in the support of $x$ are the other vertices of $V_i$,	so we have 
	\[
	(Ax)_v = \sum_{\substack{u\in V_i\\ u\ne v}} x_u = \sum_{u\in V_i} x_u - x_v = -x_v.
	\]
	For $v\in V_k$ with $k\ne i$, either $i+k\le 2b$, in this case $v$ has no neighbors in $V_i$,
	or $i+k\ge 2b+1$, and in this case $v$ is adjacent to all vertices in $V_i$. So, it gives us $(Ax)_v = \sum_{u\in V_i} x_u = 0.$	Thus, $Ax=-x$ on its support, and $Dx=d_i x =(p^{ i}-2)x$ on $V_i$. So, we obtain
	\[
	A_\alpha x = \alpha Dx + (1-\alpha)Ax
	= \alpha(p^{ i}-2)x + (1-\alpha)(-x)
	= \bigl(\alpha(p^{ i}-1)-1\bigr)x.
	\]
	So, every nonzero vector in $W_i$ is an eigenvector with eigenvalue $\lambda_i = \alpha(p^{ i}-1)-1,$ for $b+1\le i\le 2b.$
	In either case $W_i$ is $A_\alpha$--invariant and contributes an eigenvalue $\lambda_i$ with
	multiplicity $\dim W_i = n_i-1$.  Summing over $i=1,\dots,2b$, the total dimension of $W= \bigoplus_{i=1}^{2b} W_i$	is
	\[
	\sum_{i=1}^{2b}(n_i-1)
	= \left(\sum_{i=1}^{2b}n_i\right) - 2b
	= (p^{ 2b}-1) - 2b
	= p^{ 2b} - (2b+1).
	\]
	Thus, $2b$ $A_{\alpha}$ eigenvalues are yet to be determined. 	Now, consider vectors that are constant on each subset $V_i$.
	Consider $U_0 = \{x\in\mathbb{R}^{V( \varGamma(R))} : x \text{ is constant on each }V_i\}, $ so that  $\dim U_0 = 2b$, and $\mathbb{R}^{V( \varGamma(R))} = W \oplus U_0,$	since
	\[
	\dim W + \dim U_0 = (p^{ 2b}-(2b+1)) + 2b = p^{ 2b}-1 = |V( \varGamma(R))|.
	\]
	As, every vertex in $V_i$ has the same number of neighbors in each $V_j$, so the partition $\bigcup_{i=1}^{2b}V_{i}$ is equitable, it implies that $U_0$ is invariant under $A$, and hence under $A_\alpha$. 	Let $x\in U_0$ and write $x_i$ for the common value of $x$ on $V_i$. Then $x$ is encoded by the column vector $\mathbf{x} = (x_1,\dots,x_{2b})^T \in \mathbb{R}^{2b}.$ For $v\in V_i$, we have $(Ax)_v = \sum_{j=1}^{2b} Q_{ij} x_j,$ where $Q_{ij}$ is the number of neighbors in $V_j$ of a given vertex in $V_i$. From the adjacency rule $i+j\ge 2b+1$ and the structure for $a=2b+1$, we have the following facts: If $1\le i\le b$, $V_i$ is independent, so $Q_{ii}=0$, and $v\in V_i$ is adjacent to every vertex in $V_j$ for $j\ge 2b+1-i$, hence $Q_{ii} = 0,$ and 
	$$Q_{ij} =
		\begin{cases}
			n_j, & j\ne i,\ j\ge 2b+1-i,\\
			0, & j \le 2b-i.
		\end{cases}$$
		For $b+1\le i\le 2b$, $V_i$ constitutes a clique, implying that each vertex $v\in V_i$ is adjacent to the remaining $n_i-1$ vertices within $V_i$ and to all vertices in $V_j$ for $j\ge 2b+1-i$, where $j\ne i$. So, $Q_{ii} = n_i-1,$ and 
		$$Q_{ij} =
		\begin{cases}
			n_j, & j\ne i,\ j\ge 2b+1-i,\\
			0, & j \le 2b-i.
		\end{cases}$$	
		Thus, on $U_0$, $Ax $ gives rise to $Q\mathbf{x}$, and $Dx$ is equivalent to $D^\ast \mathbf{x},$
	where $D^\ast = \operatorname{diag}(d_1,\dots,d_{2b})$. 
	By applying the definition $A_\alpha = \alpha D + (1-\alpha) A$, it follows that the operation on the subspace can be simplified to $\bigl(\alpha D^\ast + (1-\alpha)Q\bigr)\mathbf{x} = B\mathbf{x}$. Consequently, the matrix $B$ serves as the representation for the restriction of $A_\alpha$ to the subspace $U_0$ when expressed in the basis of indicator vectors $\{\mathbf{1}_{V_1}, \dots, \mathbf{1}_{V_{2b}}\}$. In this context, $\mathbf{1}_{V_i}$ denotes the characteristic vector for each respective set $V_i$. Given that $\dim U_0 = 2b$, the eigenvalues of this restricted operator correspond precisely to the simple eigenvalues $\{\theta_1, \dots, \theta_{2b}\}$ of the $2b \times 2b$ matrix $B$.
	 The matrices $D^{\ast},Q$ and $B$ are as in the statement. In matrix form, we write  $B=(B_{ij})_{1\le i,j\le 2b}$ in $2\times 2$ block form as $B=	\begin{pmatrix}
		B^{(11)} & B^{(12)}\\[4pt]
		B^{(21)} & B^{(22)}
	\end{pmatrix},$	where each block is of size $b\times b$. More precisely,  $B^{(11)}=
	\begin{pmatrix}
		\alpha d_1 & 0           & \cdots & 0\\
		0          & \alpha d_2  & \cdots & 0\\
		\vdots     & \vdots      & \ddots & \vdots\\
		0          & 0           & \cdots & \alpha d_b
	\end{pmatrix}$, and the top–right block has the form
	\[
	B^{(12)}=
	\begin{pmatrix}
		0              & 0              & \cdots & 0                  & (1-\alpha)n_{2b}\\[2pt]
		0              & 0              & \cdots & (1-\alpha)n_{2b-1} & (1-\alpha)n_{2b}\\[2pt]
		\vdots         & \vdots         & \ddots & \vdots             & \vdots\\[2pt]
		0              & (1-\alpha)n_{b+2} & \cdots & (1-\alpha)n_{2b-1} & (1-\alpha)n_{2b}\\[2pt]
		(1-\alpha)n_{b+1} & (1-\alpha)n_{b+2} & \cdots & (1-\alpha)n_{2b-1} & (1-\alpha)n_{2b}
	\end{pmatrix}.
	\]
	The bottom–left block is
	\[
	B^{(21)}=
	\begin{pmatrix}
		0              & 0              & \cdots & 0               & (1-\alpha)n_{b}\\[2pt]
		0              & 0              & \cdots & (1-\alpha)n_{b-1} & (1-\alpha)n_{b}\\[2pt]
		\vdots         & \vdots         & \ddots & \vdots          & \vdots\\[2pt]
		0              & (1-\alpha)n_{2} & \cdots & (1-\alpha)n_{b-1} & (1-\alpha)n_{b}\\[2pt]
		(1-\alpha)n_{1} & (1-\alpha)n_{2} & \cdots & (1-\alpha)n_{b-1} & (1-\alpha)n_{b}
	\end{pmatrix}.
	\]
	Finally, the bottom–right block is
	\begin{footnotesize}
		\[
	B^{(22)}=
	\begin{pmatrix}
		\alpha d_{b+1} + (1-\alpha)(n_{b+1}-1) & (1-\alpha)n_{b+2} & \cdots & (1-\alpha)n_{2b}\\[2pt]
		(1-\alpha)n_{b+1} & \alpha d_{b+2} + (1-\alpha)(n_{b+2}-1) & \cdots & (1-\alpha)n_{2b}\\[2pt]
		\vdots & \vdots & \ddots & \vdots\\[2pt]
		(1-\alpha)n_{b+1} & (1-\alpha)n_{b+2} & \cdots &
		\alpha d_{2b} + (1-\alpha)(n_{2b}-1)
	\end{pmatrix},
	\]
	\end{footnotesize}
	where for $1\le i\le 2b$, we have
	\[
	d_i =
	\begin{cases}
		p^{ i} - 1, & 1\le i\le b,\\[3pt]
		p^{ i} - 2, & b+1\le i\le 2b,
	\end{cases}
	\qquad
	n_i = (p-1)p^{ 2b-i}.
	\]
\end{proof}

We will illustrate Theorem \ref{spectra thm odd} by the following example, with $b=2$ and $p=2$.\\
\begin{example}\end{example}
	Let $ \varGamma(R)$ the zero--divisor graph of $R=\mathbb{Z}_2[x]/\langle x^5\rangle\cong \mathbb{F}_2[x]/(x^5)$. 
	 Then $V( \varGamma(R))=V_1 \cup V_2 \cup V_3 \cup V_4,$	where	$|V_1|= 2^3 = 8, |V_2|= 4,|V_3| = 2, |V_4| = 1,$ and $|V( \varGamma(R))|=2^4 -1=15$. By Theorem~\ref{odd case} and Theorem~\ref{spectra thm odd} (with $p=2$, $b=2$),
	the degrees of vertices in each $V_i$ are:	$d_1 = 1, d_2 = 3, d_3 = 6, $ and $ d_4 = 14.$ 	Thus, for $p=2$, $b=2$, we have the following eigenvalues
	\[
	\begin{aligned}
		\lambda_1 &= \alpha(2^1-1) = \alpha, 
		&\quad \operatorname{mult}(\lambda_1) &= |V_1|-1 = 8-1 = 7,\\
		\lambda_2 &= \alpha(2^2-1) = 3\alpha, 
		&\quad \operatorname{mult}(\lambda_2) &= |V_2|-1 = 4-1 = 3,\\
		\lambda_3 &= \alpha(2^3-1)-1 = 7\alpha - 1,
		&\quad \operatorname{mult}(\lambda_3) &= |V_3|-1 = 2-1 = 1,\\
		\lambda_4 &= \alpha(2^4-1)-1 = 15\alpha - 1,
		&\quad \operatorname{mult}(\lambda_4) &= |V_4|-1 = 1-1 = 0,
	\end{aligned}
	\]
	where $\operatorname{mult}(\lambda_i)$ is the multiplicity of the eigenvalue $ \lambda_{i}.$ 	Hence $\lambda_4$ does not actually appear in the spectrum as  its multiplicity is zero. The remaining $2b=4$ eigenvalues arise from the restriction of $A_\alpha$ to the
	subspace $U_0$, which has dimension $\dim U_0 = 4$.  As in Theorem~\ref{spectra thm odd}, this
	restriction is represented (in the basis $\{\mathbf{1}_{V_1},\dots,\mathbf{1}_{V_4}\}$)
	by the $4\times 4$ matrix
	\[
	B = \alpha D^\ast + (1-\alpha)Q,
	\]
	where $D^\ast = \operatorname{diag}(d_1,d_2,d_3,d_4)$ and $Q=(Q_{ij})$ is the
	quotient matrix of the adjacency matrix with respect to the partition
	$V_1,\dots,V_4$. Also, $D^\ast = \operatorname{diag}(1,3,6,14)$, the matrices  $Q$ and $B$ are given below:
	\[
	Q =
	\begin{pmatrix}
		0 & 0 & 0 & 1\\
		0 & 0 & 2 & 1\\
		0 & 4 & 1 & 1\\
		8 & 4 & 2 & 0
	\end{pmatrix},
	\quad B =
	\begin{pmatrix}
		\alpha & 0 & 0 & 1-\alpha\\[3pt]
		0 & 3\alpha & 2(1-\alpha) & 1-\alpha\\[3pt]
		0 & 4(1-\alpha) & 5\alpha+1 & 1-\alpha\\[3pt]
		8(1-\alpha) & 4(1-\alpha) & 2(1-\alpha) & 14\alpha
	\end{pmatrix}.
	\]
	Let $\theta_1,\theta_2,\theta_3,\theta_4$ be the eigenvalues of $B$. Then the $A_\alpha$--spectrum of
	$ \varGamma\bigl(\mathbb{F}_2[x]/(x^5)\bigr)$ is
	\[
	\bigl\{
	\underbrace{\alpha,\dots,\alpha}_{7},
	\underbrace{3\alpha,\dots,3\alpha}_{3},
	7\alpha-1,
	\theta_1,\theta_2,\theta_3,\theta_4
	\bigr\}.
	\]

From Theorem \ref{spectra thm odd}, we have the following consequence.
	\begin{corollary}\label{cor:adj-L-Q-spectra}
		Let $p$ be a prime and $b\ge 1$ a positive integer, and the notation of
		Theorem~\ref{spectra thm odd},	$n_i=(p-1)p^{ 2b-i}$ and $D^\ast=\operatorname{diag}(d_1,\dots,d_{2b})$,
		and let $Q$ be the quotient matrix of $A( \varGamma(R))$ with respect to
		the partition $\{V_1,\dots,V_{2b}\}$.
		Then the following hold.
		\begin{enumerate}
			\item The adjacency spectrum of $ \varGamma(R)$ is given by
			\[
		\bigcup_{i=1}^{b}\{0^{[n_i-1]}\}
			\cup
			\bigcup_{i=b+1}^{2b}\{(-1)^{[n_i-1]}\}
			\cup\sigma(Q),
			\]
			where $\sigma(Q)$ is the spectrum of $Q.$
			
			\item The signless Laplacian spectrum of $ \varGamma(R)$ if given by
			\[
			\bigcup_{i=1}^{b}\{(p^{ i}-1)^{[n_i-1]}\}
			\cup
			\bigcup_{i=b+1}^{2b}\{(p^{ i}-3)^{[n_i-1]}\}
			\cup\sigma(D^\ast+Q),
			\]
			where $\sigma(D^\ast+Q)$ is the spectrum of  $D^\ast+Q=2B\!\left(\tfrac12\right)$.
		\end{enumerate}
	\end{corollary}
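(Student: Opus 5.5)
The plan is to specialize Theorem~\ref{spectra thm odd} at $\alpha=0$ and $\alpha=\tfrac12$, exactly as in the proof of Corollary~\ref{cor:adj-Q-even} for the even case. Throughout I will use the identities $A_0(\varGamma(R))=A(\varGamma(R))$ and $Q(\varGamma(R))=D+A=2A_{1/2}(\varGamma(R))$. I will also use the fact, established in the proof of Theorem~\ref{spectra thm odd}, that $\mathbb{R}^{V(\varGamma(R))}=W\oplus U_0$, where each $W_i$ is an eigenspace of dimension $n_i-1$ and the restriction of $A_\alpha$ to $U_0$ is represented by $B(\alpha)=\alpha D^\ast+(1-\alpha)Q$ in the indicator basis.

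For part (1), I put $\alpha=0$. The eigenvalue $\alpha(p^i-1)$ becomes $0$ with multiplicity $n_i-1$ for $1\le i\le b$. The eigenvalue $\alpha(p^i-1)-1$ becomes $-1$ with multiplicity $n_i-1$ for $b+1\le i\le 2b$. The remaining $2b$ eigenvalues are those of $B(0)=Q$. Since the multiplicities add up to $\sum_i(n_i-1)+2b=p^{2b}-1$, this accounts for the whole spectrum, giving $\sigma(Q)$ as the last block.

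For part (2), I put $\alpha=\tfrac12$ and multiply every eigenvalue by $2$, which preserves multiplicities. This gives $2\cdot\tfrac12(p^i-1)=p^i-1$ for $1\le i\le b$. It gives $2\bigl(\tfrac12(p^i-1)-1\bigr)=p^i-3$ for $b+1\le i\le 2b$. The last $2b$ eigenvalues are those of $2B(\tfrac12)=D^\ast+Q$. As a sanity check I would recompute directly on $W_i$: there $Ax=0$ or $Ax=-x$, and $Dx=d_ix$, so $(D+A)x$ equals $(p^i-1)x$ in the first case and $(p^i-2-1)x$ in the second.

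The only point needing care is that the $U_0$-block is not symmetric, so its eigenvalues should be stated as $\sigma(Q)$ and $\sigma(D^\ast+Q)$ as multisets. These are real because the quotient matrix of an equitable partition is similar to a symmetric matrix, namely $\mathrm{diag}(\sqrt{n_i})\,Q\,\mathrm{diag}(\sqrt{n_i})^{-1}$. I also have to make sure that any coincidences between these eigenvalues and the $W_i$ eigenvalues are counted with multiplicity. Both issues are handled by the direct-sum decomposition $\mathbb{R}^{V(\varGamma(R))}=W\oplus U_0$ into invariant subspaces, and I do not expect a real obstacle.
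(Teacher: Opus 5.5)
Your proposal is correct and follows the paper's proof essentially verbatim. You specialize Theorem~\ref{spectra thm odd} at $\alpha=0$ and at $\alpha=\tfrac12$ (then multiply by $2$), read off the $W_i$ eigenvalues $0,-1$ and $p^i-1,\,p^i-3$, and take the remaining $2b$ eigenvalues from $B(0)=Q$ and $2B(\tfrac12)=D^\ast+Q$. Your extra remarks are a useful tightening of the paper's wording that these are ``simple'' eigenvalues: you count multiplicities as multisets via the invariant decomposition $W\oplus U_0$, and you note that $\sigma(Q)$ is real because $\operatorname{diag}(\sqrt{n_i})\,Q\,\operatorname{diag}(\sqrt{n_i})^{-1}$ is symmetric. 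Strictly, each $W_i$ is contained in an eigenspace rather than being one, but this does not affect the count.
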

	\begin{proof} Let $\lambda_{i}(\alpha)$ be the $A_{\alpha}$ eigenvalue of $ \varGamma(R)$. Then by Theorem \ref{spectra thm odd}, we have the following cases.\\ 
		(1) The adjacency matrix of $G$ is $A(G)=A_0(G)$. So, with $\alpha=0$ in Theorem~\ref{spectra thm odd}, we obtain
		\[
		\lambda_i(0)=0,\quad 1\le i\le b,\qquad\text{and}\qquad
		\lambda_i(0)=-1,\quad b+1\le i\le 2b,
		\]
		each with multiplicity $n_i-1$. Theorem~\ref{spectra thm odd} further shows that the other \(2b\) eigenvalues of \(A_0(G)=A(G)\) coincide with the eigenvalues of  
		\[
		B(0)=0\cdot D^{\ast}+(1-0)Q^{\ast}=Q^{\ast},
		\]
		that is, the quotient matrix of \(A(G)\) corresponding to the partition \(\{V_i\}\).
		
		\medskip
		
		\noindent
		(2) The signless Laplacian of \(G\) is defined by
		\[
		Q(G)=D(G)+A(G)=2A_{1/2}(G).
		\]
		Hence, if \(\mu\) is an eigenvalue of \(A_{1/2}(G)\), then \(2\mu\) is an eigenvalue of \(Q(G)\) with equal multiplicities. 
		For for $\alpha=\tfrac12$. Theorem~\ref{spectra thm odd} implies that
		 $\lambda_i\Bigl(\frac12\Bigr) = \tfrac12\bigl(p^{ i}-1\bigr),$ for $1\le i\le b,$ and 
		$\lambda_i\Bigl(\frac12\Bigr) = \tfrac12\bigl(p^{ i}-1\bigr)-1 = \tfrac{p^{ i}-3}{2},$ for $b+1\le i\le 2b,$ 	each with multiplicity $n_i-1$. 
		Hence, the corresponding eigenvalues of $Q(G)=2A_{1/2}(G)$ are
		\[
		p^{ i}-1,\quad 1\le i\le b,\quad\text{and}\quad
		p^{ i}-3,\quad b+1\le i\le 2b,
		\]
		each with multiplicity $n_i-1$.	The remaining $2b$ eigenvalues of $A_{1/2}(G)$ are the eigenvalues of the following matrix
		\[
		B\Bigl(\frac12\Bigr) = \frac12 D^\ast + \frac12 Q^\ast
		= \frac12\bigl(D^\ast+Q^\ast\bigr).
		\]
		Multiplying by $2$, the remaining $2b$ eigenvalues of $Q( \varGamma(R))$ are exactly the eigenvalues of $D^\ast+Q^\ast$. This proves part (2).
	\end{proof}
	
	The following consequence of Theorem \ref{spectra thm odd} shows that the Laplacian eigenvalues of $ \varGamma(R)$ are integers.
	\begin{corollary}\label{L integral odd}
		The Laplacian matrix $L( \varGamma(R))$ is integral, and its spectrum is
		\[
		\{0\}\cup\bigcup_{\substack{1\le i\le 2b\\ i\neq b}}\{(p^{ i}-1)^{[n_i]}\}\cup\{(p^{ b}-1)^{[n_b-1]}\},
		\]
		where $n_i = |V_i| = (p-1)p^{2b-i}$.
	\end{corollary}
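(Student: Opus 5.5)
The plan is to follow the proof of Corollary \ref{L integral even} step by step, replacing $N=2b-1$ by $N=2b$ and the threshold $i+j\ge 2b$ by $i+j\ge N+1=2b+1$.

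\textbf{Step 1: the multiplicities from the $W_i$.} I will use the identity $L=\frac{1}{\alpha-\beta}(A_\alpha-A_\beta)$. Applied to the subspaces $W_i$ in the proof of Theorem \ref{spectra thm odd}, it gives the Laplacian eigenvalue $p^i-1$ with multiplicity $n_i-1$ for every $1\le i\le 2b$. This covers both the independent levels ($i\le b$) and the clique levels ($i\ge b+1$), since the $-1$ shift cancels in the difference.

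\textbf{Step 2: the quotient matrix.} The remaining $2b$ Laplacian eigenvalues are those of the quotient matrix $\overline{L}=D^\ast-Q$. Its entries are
\[
\overline{L}_{ii}=p^i-1 \ (i\le b),\qquad \overline{L}_{ii}=p^i-1-n_i \ (i\ge b+1),\qquad \overline{L}_{ij}=-n_j \ \text{if } i\ne j,\ i+j\ge 2b+1,
\]
and all other entries are zero. It then suffices to show that $\sigma(\overline{L})=\{0\}\cup\{p^k-1:1\le k\le 2b,\ k\ne b\}$. Combined with Step 1, this yields exactly the stated multiset: the value $p^i-1$ with $i\ne b$ receives $n_i-1$ copies from $W_i$ plus one from $\overline{L}$, while $p^b-1$ receives only its $n_b-1$ copies. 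The eigenvalue $0$ comes from $\mathbf{1}$, since every row sum of $\overline{L}$ is zero. The values $p^k-1$ for $k\ne b$ are pairwise distinct and positive, so these give $2b$ distinct eigenvalues, which is the full spectrum of $\overline{L}$.

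\textbf{Step 3: eigenvectors for $1\le k\le b-1$.} I will reuse the vector $v^{(k)}$ of \eqref{eq:v-k} with $N=2b$: it equals $0$ below $k$, $-\varGamma_k$ at $k$, $1$ on $k<j\le N-k$, and $0$ above $N-k$. The row checks go through verbatim. The key sum is
\[
\sum_{j=N+1-i}^{N-k} n_j=p^i-p^k,
\]
which follows from \eqref{eq:general-sum}, still valid with $N=2b$. The diagonal correction $\varepsilon_i n_i$ is now switched on exactly when $i\ge b+1$, which is the same as the condition $N+1-i\le i$. The bottom rows force $\varGamma_k=(p^{N-k}-p^k)/n_k$. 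This is nonzero precisely because $k<N-k$, i.e.\ $k\le b-1$. At $k=b$ the support of $v^{(k)}$ collapses to the single index $b$, and the construction degenerates to the zero vector; this is exactly why $p^b-1$ is missing from $\sigma(\overline{L})$.

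\textbf{Step 4: eigenvectors for $b+1\le k\le 2b$.} I will use $w^{(k)}$ of \eqref{eq:w-k} with $m=2b+1-k\le b<k$. The defining relation is $\delta_k\sum_{j=m}^{k-1}n_j=n_k$, and the same sum formula gives $\delta_k=n_k/(p^k-p^{2b+1-k})$. The rows $m\le i\le k-1$ again split according to whether $i\le b$ or $i\ge b+1$ (clique diagonal). In both cases the row reduces to the single condition $\delta_k(p^k-p^{N-k+1})=n_k$. The rows $i<m$ and $i>k$ vanish as before.

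\textbf{Main obstacle.} The only delicate point is the bookkeeping of the shifted threshold. In the even case the clique levels began at $i=b$; here they begin at $i=b+1$. I must therefore check, in each row range, whether the diagonal index $i$ lies in the interval $[N+1-i,\ \cdot\,]$ of contributing columns. This happens if and only if $2i\ge 2b+1$, and it is what makes the $\varepsilon_i n_i$ correction cancel against $\overline{L}_{ii}=p^i-1-n_i$. I will verify this cancellation carefully at the boundary rows $i=b$ and $i=b+1$.
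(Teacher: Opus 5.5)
Your proposal is correct and matches the paper's proof step for step. Both get the $W_i$ eigenvalues from $L=\frac{1}{\alpha-\beta}(A_\alpha-A_\beta)$, pass to the quotient matrix $\overline{L}=D^\ast-Q$, and use the same eigenvectors $v^{(k)}$ and $w^{(k)}$ with the same scalars $\varGamma_k=(p^{2b-k}-p^k)/n_k$ and $\delta_k=(p-1)/(p^{2(k-b)}-p)$, finishing with distinctness of the $2b$ values. Your explicit row-by-row check with $N=2b$ and your zero-row-sum justification of the eigenvalue $0$ supply details that the paper only sketches in the odd case.
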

	\begin{proof}
		We continue with the notation of Theorem~\ref{spectra thm odd}. Recall that $V(G)=\bigcup_{i=1}^{2b}V_i, n_i=|V_i|=(p-1)p^{2b-i},$ and for $v\in V_i$, we have
		\[
		d_i=\deg(v) =
		\begin{cases}
			p^{ i}-1, & 1\le i\le b,\\
			p^{ i}-2, & b+1\le i\le 2b.
		\end{cases}
		\]
		It is known that
		\[
		\frac{1}{\alpha-\beta}\bigl(A_\alpha(G)-A_\beta(G)\bigr)
		= D(G) - A(G) = L(G).
		\]
		For $1\leq i\leq b$, we get the Laplacian eigenvalues 
		$$\frac{1}{\alpha-\beta}\Big( \alpha(p^{ i}-1)-( \beta(p^{ i}-1))\Big)=p^{i}-1,$$
		 with multiplicity $n_{i}-1.$ Similarly, for $b+1\leq i\leq 2b$, we have the Laplacian eigenvalues
		 $$\frac{1}{\alpha-\beta}\Big( \alpha(p^{ i}-1)-1-( \beta(p^{ i}-1))-1\Big)=p^{i}-1,$$ each with multiplicity $n_{i}-1.$ Let $D^\ast=\operatorname{diag}(d_1,\dots,d_{2b})$ be the diagonal matrix, and let $Q^\ast=(Q_{ij})$ be the quotient matrix of $A(G)$ with respect to $\{V_i\}$ as in Theorem~\ref{spectra thm odd}. Then the quotient matrix of $A_\alpha(G)$ is
		\[
		B(\alpha) = \alpha D^\ast + (1-\alpha)Q^\ast.
		\]
		For $\alpha=1$ and $\alpha=0,$ we have $B(1)=D^\ast, $ and $ B(0)=Q^\ast,$ so, for any $\alpha\neq\beta$, where
		\[
		\frac{1}{\alpha-\beta}\bigl(B(\alpha)-B(\beta)\bigr)
		= D^\ast - Q^\ast,
		\]
		where  $D^\ast-Q^\ast$ is the quotient matrix of $L(G)$ with respect to the partition $\{V_i\}$. Denote this $2b\times 2b$ matrix by $	\overline{L}= D^\ast - Q^\ast,$ with
		\[
		\overline{L}=
		\begin{pmatrix}
			d_1 & 0   & 0   & \cdots & 0         & 0         & -n_{2b} \\[3pt]
			0   & d_2 & 0   & \cdots & 0         & -n_{2b-1} & -n_{2b} \\[3pt]
			0   & 0   & d_3 & \cdots & -n_{2b-2} & -n_{2b-1} & -n_{2b} \\[3pt]
			\vdots & \vdots & \vdots & \ddots & \vdots    & \vdots    & \vdots \\[3pt]
			0   & 0   & -n_3 & \cdots &
			p^{ 2b-2}-1-n_{2b-2} & -n_{2b-1} & -n_{2b} \\[3pt]
			0   & -n_2 & -n_3 & \cdots & -n_{2b-2} &
			p^{ 2b-1}-1-n_{2b-1} & -n_{2b} \\[3pt]
			-n_1 & -n_2 & -n_3 & \cdots & -n_{2b-2} & -n_{2b-1} &
			p^{ 2b}-1-n_{2b}
		\end{pmatrix},
		\]
		where, for \(1\le i\le 2b\), we have
		\[
		d_i =
		\begin{cases}
			p^{i}-1, & 1\le i\le b,\\[3pt]
			p^{i}-2, & b+1\le i\le 2b,
		\end{cases}
		\quad\text{and}\quad
		n_i = (p-1)p^{2b-i}.
		\]
		Next, we calculate the  eigenvalues of $\overline{L}$.	 It is known that $0$ is the eigenvalue of $\overline{L}, $ due to positive semidefinite property of the Laplacian matrix of a  connected $ \varGamma(R).$
	For $r\ge1$, we recall the tail identity,
	\begin{align}
		\sum_{j=2b-r+1}^{2b} n_j
		&= \sum_{j=2b-r+1}^{2b} (p-1)p^{2b-j}
		= (p-1)\sum_{t=0}^{r-1} p^{t}
		= p^{r}-1. \label{eq:tail-sum}
	\end{align}
	For the eigenvalues $p^{k}-1$ of $\overline{L}$, for $1\le k\le b-1$, we fix $k$ with $1\le k\le b-1$, and set
	$	\lambda_k = p^{k}-1.$ Now, consider a  vector $v^{(k)}\in\mathbb{R}^{2b}$ defined as
	\begin{equation}\label{eq:v-k-def}
		v^{(k)}_j =
		\begin{cases}
			0, & 1\le j<k,\\
			- \varGamma_k, & j=k,\\
			1, & k<j\le 2b-k,\\
			0, & 2b-k<j\le 2b,
		\end{cases}
	\end{equation}
	for some scalar $ \varGamma_k$ to be chosen.	We claim that we can choose $ \varGamma_k$ so that $\overline{L} v^{(k)} = \lambda_k v^{(k)}.$

	For rows of $\overline{L}$, if $i<k$, then $v^{(k)}_i=0$, and all nonzero off–diagonal entries in row $i$ are in columns $	j\in\{2b-i+1,\dots,2b\},$	which are strictly greater than $2b-k$ because $i<k$. Thus $v^{(k)}_j=0$ for all such columns, and hence
	\[
	(\overline{L}v^{(k)})_i
	= d_i v^{(k)}_i - \sum_j n_j v^{(k)}_j = 0
	= \lambda_k v^{(k)}_i.
	\]
	For the $k$-th row of $\overline{L}$, the off–diagonal $-n_j$'s again exists only in columns $2b-k+1,\dots,2b$, where $v^{(k)}_j=0$, so we obtain
	\[
	(\overline{L}v^{(k)})_k = d_k v^{(k)}_k = (p^k-1) v^{(k)}_k
	= \lambda_k v^{(k)}_k.
	\]
	Thus the eigenvalue equation holds for $i=k$ independently of $ \varGamma_k$.
	For the middle rows $k<i\le 2b-k$, the only nonzero entries of $v^{(k)}$ visible to row $i$ are some of the $1$'s (from the block $k<j\le 2b-k$) and possibly the entry $- \varGamma_k$ at position $j=k$.	Using again the geometric-series identity \eqref{eq:tail-sum} to evaluate the sums of $n_j$ against the block of $1$'s, we find  that the contribution of the $1$-block always sums to exactly $p^{i}-1$, matching the diagonal part. The terms involving $- \varGamma_k$ cancel between the diagonal and off–diagonal contributions. Thus, we have
	\[
	(\overline{L}v^{(k)})_i = (p^{i}-1)v^{(k)}_i = \lambda_k v^{(k)}_i,
	\]
	for all $k<i\le 2b-k$, again without imposing any restriction on $ \varGamma_k$.
	
	For bottom rows $i>2b-k$, the only rows that constrain $ \varGamma_k$ are the bottom ones with $i>2b-k$. 
	A direct calculation, utilizing the explicit representation of these rows and the pattern \eqref{eq:v-k-def}, demonstrates that all such rows provide the same scalar condition.
	\[
	 \varGamma_k p^{ 2b+1-k} -  \varGamma_k p^{ 2b-k} - p^{ 2b-k} + p^{ k} = 0.
	\]
	Upon analyzing the earlier equation for $ \varGamma_k$, we derive
	\begin{equation}\label{eq:gamma-k-small}
		 \varGamma_k = \frac{p^{2b-k}-p^k}{p^{2b-k}(p-1)} = \frac{1-p^{2k-2b}}{p-1}.
	\end{equation}
	With this choice, every row of $\overline{L}v^{(k)}=\lambda_k v^{(k)}$ holds, so $p^k-1$ is an eigenvalue for $1\le k\le b-1$.
	
	\medskip
	
	Now, for the eigenvalues $p^{k}-1$ of $\overline{L}$, with $b+1\le k\le 2b$, we fix $k$ with $b+1\le k\le 2b$, and set $\lambda_k=p^k-1$. With $m=2b+1-k$, consider
	\begin{equation}\label{eq:w-k-def}
		w^{(k)}_j =
		\begin{cases}
			0,          & 1\le j < m,\\
			- \varGamma_k,  & m\le j \le k-1,\\
			1,          & j=k,\\
			0,          & k<j\le 2b,
		\end{cases}
	\end{equation}
	for another scalar $ \varGamma_k$ to be chosen. We are required to show that $\overline{L} w^{(k)} = \lambda_k w^{(k)}.$ The rows of $\overline{L}$, whose nonzero entries lie entirely in the zero part of $w^{(k)}$ trivially satisfy $0=0$. For rows that see the constant block of $- \varGamma_k$ and the single $1$ at position $k$, a direct computation using geometric–series sums \eqref{eq:tail-sum} of the $n_j$ gives that the eigenvalue equation reduces to the scalar condition $ \varGamma_k p^{ 2(k-b)} -  \varGamma_k p - p + 1 = 0.$ Its evaluation gives
	\begin{equation}\label{eq:gamma-k-large}
		 \varGamma_k = \frac{p-1}{p^{2(k-b)}-p}
		= \frac1p\cdot\frac{1}{1+p+\cdots+p^{2(k-b)-2}}.
	\end{equation}
	With the above choice of $ \varGamma_k$, each row satisfies $(\overline{L}w^{(k)})_i = \lambda_k w^{(k)}_i,$ and 
	hence if follows that
	\[
	\overline{L}w^{(k)} = (p^k-1)w^{(k)},\qquad b+1\le k\le 2b.
	\]
	The numbers $p^k-1$ are strictly increasing in $k$, so all these eigenvalues are distinct, and together with $0$ they give $2b$ eigenvalues for the $2b\times 2b$ matrix $\overline{L}$.
\end{proof}

\section{Comments}
The following result due to Aouchiche and Hensen related the Laplacian eigenvalues  with the distance Laplacian eigenvalues of a connected graph with diameter $2.$
\begin{lemma}[\cite{Ah2013}]\label{ah lemma}
	Let $G$ be a connected graph on $n$ vertices with diameter $D \leq 2$. Let $\lambda^L_1 \geq \lambda^L_2 \geq \cdots \geq \lambda^L_{n-1} > \lambda^L_n = 0$ be the Laplacian spectrum of $G$. Then the distance Laplacian spectrum of $G$ is $2n - \lambda^L_{n-1} \geq 2n - \lambda^L_{n-2} \geq \cdots \geq 2n - \lambda^L_1 > \partial^L_n = 0$. Moreover, for every $i \in \{1, 2, \ldots, n-1\}$ the eigenspaces corresponding to $\lambda^L_i$ and to $2n - \lambda^L_{n-i}$ are the same.
\end{lemma}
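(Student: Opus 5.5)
Lemma \ref{ah lemma} is a cited result of Aouchiche and Hansen, so a plan for it is really a plan for the standard argument that distance-Laplacian and Laplacian spectra are complementary when the diameter is at most $2$.

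\emph{Step 1: express $\mathcal{L}(G)$ through $L(G)$.} Since $\operatorname{diam}(G)\le 2$, every off-diagonal entry of $D_{\mathrm{dist}}(G)$ is $1$ if the vertices are adjacent and $2$ otherwise. Hence
\[
D_{\mathrm{dist}}(G)=A(G)+2\bigl(J-I-A(G)\bigr)=2J-2I-A(G),
\]
where $J$ is the all-ones matrix. The transmission of a vertex is $t(v)=d(v)+2(n-1-d(v))=2n-2-d(v)$, so $\operatorname{Tr}(G)=(2n-2)I-D(G)$. Subtracting gives
\[
\mathcal{L}(G)=\operatorname{Tr}(G)-D_{\mathrm{dist}}(G)=2nI-2J-\bigl(D(G)-A(G)\bigr)=2nI-2J-L(G).
\]

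\emph{Step 2: simultaneous diagonalization.} Both $L(G)$ and $\mathcal{L}(G)$ have $\mathbf 1$ in their kernel. Since $L(G)$ is symmetric, choose an orthonormal eigenbasis $\mathbf 1/\sqrt n,x_1,\dots,x_{n-1}$ with $L(G)x_i=\lambda^L_i x_i$ and each $x_i\perp\mathbf 1$. For these vectors $Jx_i=0$, so by Step 1
\[
\mathcal{L}(G)x_i=(2n-\lambda^L_i)x_i,\qquad \mathcal{L}(G)\mathbf 1=2n\mathbf 1-2n\mathbf 1-0=0 .
\]
This yields the full spectrum, $0$ together with the values $2n-\lambda^L_i$. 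Ordering them decreasingly reverses the Laplacian order, which gives the stated list. It also shows that each Laplacian eigenspace orthogonal to $\mathbf 1$ is exactly the distance-Laplacian eigenspace for the matching value $2n-\lambda^L$, which is the claimed equality of eigenspaces.

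\emph{Step 3: positivity.} Because $G$ is connected, $\lambda^L_{n-1}>0$, so $0$ is a simple Laplacian eigenvalue. Moreover $\lambda^L_1\le n$, since $L(G)\preceq L(K_n)$ and $L(K_n)$ has largest eigenvalue $n$. Hence every value $2n-\lambda^L_i\ge n>0$, and the strict inequality $2n-\lambda^L_1>0$ holds.

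\emph{Main obstacle.} The only real content is the identity in Step 1, and it uses the diameter hypothesis in an essential way. The rest is linear algebra. For the paper's purposes, this lemma combined with Corollaries \ref{L integral even} and \ref{L integral odd} immediately gives distance Laplacian integrality, because Theorems \ref{even case} and \ref{odd case} establish diameter $2$.
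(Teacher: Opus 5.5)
Your argument is correct and is the standard proof behind the cited Aouchiche--Hansen result; the paper itself gives no proof, only the citation. The route is the identity $\mathcal{L}(G)=2nI-2J-L(G)$ for diameter at most $2$, the common orthonormal eigenbasis $\mathbf 1/\sqrt n, x_1,\dots,x_{n-1}$, and the bound $\lambda^L_1\le n$, which gives positivity and simplicity of $0$.

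One caveat concerns the eigenspace clause. What you actually prove is that the $L$-eigenspace of $\lambda^L_i$ equals the $\mathcal{L}$-eigenspace of $2n-\lambda^L_i=\partial^L_{n-i}$. The lemma as printed instead pairs $\lambda^L_i$ with $2n-\lambda^L_{n-i}$, and that is a misprint. It already fails for $P_3$: there $\lambda^L_1=3$ has eigenvector $(1,-2,1)$, while $2n-\lambda^L_{2}=5$ has eigenvector $(1,0,-1)$. You should say explicitly that you are proving the corrected pairing, rather than calling your conclusion ``the claimed equality''.
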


From Corollary \ref{L integral even}, the Laplacian spectrum of $ \varGamma(\mathbb{Z}_p[x]/\langle x^{2b}\rangle)$ is
	\[\{0\}
	\cup
	\bigcup_{\substack{1\le i\le 2b-1\\ i\neq b}}
	\bigl\{(p^{ i}-1)^{[n_i]}\bigr\}
	\cup
	\bigl\{(p^{ b}-1)^{[n_b-1]}\bigr\},
	\]
	where $n_i = |V_i| = (p-1)p^{ 2b-1-i}, 1\le i\le 2b-1,$ and the total number of vertices is $n = |V( \varGamma(R))|= p^{2b-1}-1.$ By Lemma~\ref{ah lemma}, with Laplacian spectrum $\sigma(L( \varGamma(R))),$ the distance Laplacian spectrum is
	\[
	\{ 2n-\lambda : \lambda\in\sigma(L( \varGamma(R)))\setminus\{0\} \}
	\cup\{0\},
	\]
	with the same multiplicities as the corresponding Laplacian eigenvalues (and one zero eigenvalue). Since $2n = 2(p^{2b-1}-1)=2p^{2b-1}-2$, we obtain the distance Laplacian spectrum $ \varGamma(R)$ as	
	\[
	\{0\}
	\cup
	\{ 2n-0 \}
	\cup
	\bigcup_{\substack{1\le i\le 2b-1\\ i\neq b}}
	\bigl\{ (2n-(p^{ i}-1))^{[n_i]}\bigr\}
	\cup
	\bigl\{ (2n-(p^{ b}-1))^{[n_b-1]}\bigr\}.
	\]
	With $2n=2p^{2b-1}-2$,  the explicit form is
	\[
	\{0\}
	\cup
	\{ 2p^{2b-1}-2 \}
	\cup
	\bigcup_{\substack{1\le i\le 2b-1\\ i\neq b}}
	\bigl\{ (2p^{2b-1}-p^{ i}-1)^{[n_i]}\bigr\}
	\cup
	\bigl\{ (2p^{2b-1}-p^{ b}-1)^{[n_b-1]}\bigr\}.
	\]

	Now, for $ \varGamma(\mathbb{Z}_p[x]/\langle x^{2b+1}\rangle)$, order is $n=p^{2b}-1,$ and $2n = 2(p^{2b}-1) = 2p^{2b}-2.$ By the Corollary \ref{L integral odd}, the Laplacian spectrum  of $ \varGamma(R)$ is
	\[\{0\}\cup\bigcup_{\substack{1\le i\le 2b\\ i\neq b}}\{(p^{ i}-1)^{[n_i]}\}
	\cup\{(p^{ b}-1)^{[n_b-1]}\}.
	\]
	With Lemma \ref{ah lemma}, $	\mu_i = 2n - \lambda_i,$  the distance Laplacian spectrum of  $ \varGamma(R))$ is
	\[
	\{ 2n-0 \}
	\cup
	\bigcup_{\substack{1\le i\le 2b\\ i\neq b}}
	\bigl\{ (2n - (p^{ i}-1))^{[n_i]}\bigr\}
	\cup
	\bigl\{ (2n - (p^{ b}-1))^{[n_b-1]}\bigr\}.
	\]
	By substituting $2n = 2p^{2b}-2$, we derive the exact distance Laplacian spectrum of $ \varGamma(R)$ as
	\[ 	\{ 2p^{2b}-2 \} \cup \bigcup_{\substack{1\le i\le 2b\\ i\neq b}} 	\bigl\{ (2p^{2b}-p^{ i}-1)^{[n_i]}\bigr\} \cup 	\bigl\{ (2p^{2b}-p^{ b}-1)^{[n_b-1]}\bigr\}.
	\]
	
	Thus, we have the following result related to the distance Laplacian integral eigenvalues of $ \varGamma(\mathbb{Z}_p[x]/\langle x^{c}\rangle)$.
	\begin{corollary}\label{cor distance}
		The zero-divisor graph $ \varGamma(\mathbb{Z}_p[x]/\langle x^{b}\rangle)$ is distance Laplacian integral.
	\end{corollary}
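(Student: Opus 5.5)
We take the exponent in the statement to be an arbitrary $c\in\mathbb{N}$, as in the title of the section. The plan is to reduce distance Laplacian integrality to Laplacian integrality via Lemma~\ref{ah lemma}, and then invoke Corollaries~\ref{L integral even} and \ref{L integral odd}. So the argument splits according to the diameter of $\varGamma(\mathbb{Z}_p[x]/\langle x^{c}\rangle)$, i.e.\ according to small versus large $c$.

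First we dispose of the degenerate cases.
\begin{itemize}
\item For $c=1$ the ring is the field $\mathbb{Z}_p$. It has no nonzero zero-divisors, so the graph is empty and the statement is vacuous.
\item For $c=2$, Theorem~\ref{even case} (with $b=1$) gives $\varGamma(R)\cong K_{p-1}$. Here every distance equals $1$, so the distance Laplacian coincides with the Laplacian $L(K_{p-1})$, whose spectrum $\{0,(p-1)^{[p-2]}\}$ is integral. When $p=2$ the graph is $K_1$ with spectrum $\{0\}$.
\end{itemize}

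For $c\ge 3$ the graph is connected with diameter exactly $2$. This follows from part (4) of Theorem~\ref{even case} when $c=2b\ge4$, and from part (4) of Theorem~\ref{odd case} when $c=2b+1\ge3$; in both cases the universal vertices in $V_{c-1}$ give connectedness and diameter at most $2$. This includes the small case $c=3$, $p=2$, where the graph is the path $P_3$. Hence Lemma~\ref{ah lemma} applies with $n=p^{c-1}-1$. Every nonzero Laplacian eigenvalue $\lambda$ then yields the distance Laplacian eigenvalue $2n-\lambda$ with the same multiplicity, together with the eigenvalue $0$. Since $2n$ is an integer, integrality of the Laplacian spectrum implies integrality of the distance Laplacian spectrum. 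The explicit Laplacian spectra come from Corollary~\ref{L integral even} for even $c$ and Corollary~\ref{L integral odd} for odd $c$. Substituting gives the explicit integer lists displayed just before the statement, namely $2p^{c-1}-p^{i}-1$ with multiplicities $n_i$ or $n_b-1$.

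The main obstacle is the reliability of the Laplacian integrality step, in particular the part of the quotient-matrix $\overline{L}$ spectrum whose eigenvector verification was only sketched in Corollary~\ref{L integral odd}. As a safeguard I would give an independent argument that $\varGamma(R)$ is a threshold graph. By the adjacency rule, $f\in V_i$ and $g\in V_j$ are adjacent iff $i+j\ge c$, so neighbourhoods are nested: $N[v]\supseteq N[u]$ or $N(v)\supseteq N(u)$ whenever $v\in V_i$, $u\in V_{i'}$ with $i\ge i'$. Equivalently, the graph can be built by repeatedly adding isolated or dominating vertices, in the order
\[
V_1,\ V_2,\ \dots \text{ (isolated)}, \qquad \text{interleaved with } V_{c-1},\ V_{c-2},\ \dots \text{ (dominating)}.
\]
By Merris' theorem, the Laplacian spectrum of a threshold graph is the conjugate of its degree sequence, hence integral. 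As a consistency check, the conjugate partition of the degrees $d_i$ from Theorems~\ref{spectra thm even} and \ref{spectra thm odd} can be compared with the spectra in Corollaries~\ref{L integral even} and \ref{L integral odd}. Combining this with Lemma~\ref{ah lemma} completes the proof for all $c$.
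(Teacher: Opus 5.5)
Your proof is correct, and its backbone is the paper's own argument. Because the graph has diameter at most $2$, Lemma~\ref{ah lemma} sends each nonzero Laplacian eigenvalue $\lambda$ to $2n-\lambda$, so distance Laplacian integrality is inherited from Corollaries~\ref{L integral even} and~\ref{L integral odd}. You go beyond the paper in two ways.

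The explicit treatment of $c=1,2$ is harmless; for $c=2$ the lemma would also apply, since $K_{p-1}$ has diameter at most $1$.

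More substantively, the threshold-graph route is a genuinely independent proof of the Laplacian input. The nesting $N(u)\subseteq N[v]$ whenever $\operatorname{mindeg}(u)\le\operatorname{mindeg}(v)$ follows in one line from the rule $i+j\ge c$. Merris' theorem then gives integrality with no eigenvector computations. By contrast, the paper's proof of Corollary~\ref{L integral odd} only sketches the verification for the quotient matrix $\overline{L}$. Your route also gives the diameter bound for free, since a connected threshold graph has a dominating vertex. The paper's route, in exchange, produces explicit eigenvectors and fits into its $A_\alpha$ framework.

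Your proposed consistency check does go through. Take $d_0=0$. The conjugate partition takes the value $\sum_{j\ge i}n_j=p^{c-i}-1$ exactly $d_i-d_{i-1}$ times. Here $d_i-d_{i-1}=(p-1)p^{i-1}=n_{c-i}$, except at the jump from the last independent level to the first clique level, where it is one less. This yields $p^{b}-1$ with multiplicity $n_b-1$ in both parities ($b=\lfloor c/2\rfloor$), and a single $0$ at $k=n$, exactly as in the two corollaries.

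There are two small slips.

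\begin{itemize}
\item The sequence $V_{c-1},V_1,V_{c-2},V_2,\dots$ is a \emph{deletion} order: each level is dominating or isolated in what remains. The build order is its reverse. It starts from the middle level(s) and ends with $V_1$ (isolated) and then $V_{c-1}$ (dominating). Read literally as a build order, your list would add $V_2$ as isolated after $V_{c-1}$, contradicting $2+(c-1)\ge c$. This is cosmetic, since your nesting criterion already characterizes threshold graphs.
\item Do not assert agreement with the lists displayed just before the statement. They contain a spurious eigenvalue $2n$: alongside $0$ in the even case, and in place of $0$ in the odd case. It comes from applying $\lambda\mapsto 2n-\lambda$ to the zero Laplacian eigenvalue, which Lemma~\ref{ah lemma} excludes. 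Your own description is the correct spectrum: $0$, together with $2p^{c-1}-p^{i}-1$ with multiplicity $n_i$ for $i\ne b$ and $n_b-1$ for $i=b$. Integrality is unaffected either way.
\end{itemize}
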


\section*{Conclusion}
In this paper, we investigated the zero-divisor graph $ \varGamma\!\left(\mathbb{Z}_{p}[x]/\langle x^{c} \rangle\right)$ associated with the commutative ring $\mathbb{Z}_{p}[x]/\langle x^{c} \rangle$, where $p$ is a prime and $c\in\mathbb{N}$. We explicitly determined the spectrum of the $A_{\alpha}$-matrix of these graphs in Theorems \ref{spectra thm even} and \ref{spectra thm odd}, and as important special cases obtained the adjacency and signless Laplacian spectra. Furthermore, we proved that the Laplacian matriix of $ \varGamma\!\left(\mathbb{Z}_{p}[x]/\langle x^{c} \rangle\right)$ have integral eigenvalues (Corollaries \ref{L integral odd} and \ref{L integral odd}), thereby showing that these zero-divisor graphs are Laplacian integral. As a consequence for graph of diameter $2$, Corollary \ref{cor distance} shows that $ \varGamma\!\left(\mathbb{Z}_{p}[x]/\langle x^{c} \rangle\right)$ is  distance Laplacian integral. These results highlight a strong interplay between the algebraic structure of the underlying ring and the spectral properties of its zero-divisor graph, and provide a foundation for further spectral analysis of zero-divisor graphs arising from other families of commutative rings.

	\section*{Declarations}
	\noindent \textbf{Data Availability:}	There is no data associated with this article as data sharing is not applicable since no data sets were generated or analyzed during the current study.
	
	\noindent \textbf{Funding:} The authors declare that no funds, grants, or other support were received during the preparation of this manuscript.
	
	\noindent \textbf{Conflict of interest:} The authors have no competing interests to declare that are relevant to the content of this article.
	
	\noindent\textbf{Note:} I welcome any comments and suggestions regarding this article; please feel free to contact
	me at \href{bilalahmadrr@gmail.com}{bilalahmadrr@gmail.com}
	
	

\begin{thebibliography}{0}
		\bibitem{AAlphaBounds}
		A.~Alhevaz, M.~Baghipur and H.~A.~Ganie,
		Bounds for the spectral radius of the $A_\alpha$-matrix of graphs,
		\emph{Indian J.\ Pure Appl.\ Math.} \textbf{55} (2024), 298--309.
		\bibitem{al} { D.F. Anderson and P.S. Livingston, The zero-divisor graph of a commutative ring, \em J. Algebra} {\bf 217} (1999) 434--447.
		\bibitem{Ah2013} M. Aouchiche, and  P. Hansen, Two Laplacians for the distance matrix of a graph, \emph{Linear Algebra  Appl.} \textbf{439}(1) (2013)  21--33.
		
		\bibitem{aouchiche-hansen-distance} M. Aouchiche and  P. Hansen,  Distance spectra of graphs: a survey,  \textit{Linear Algebra Appl.} {\bf 458} (2014) 301--386.
		\bibitem{AH3} M. Aouchiche and  P. Hansen,  Some properties of the distance Laplacian eigenvalues a graph,  \textit{Czech. Math. J.} {\bf 64,139} (2014) 751--761.
		
		\bibitem{ib}  {I. Beck, Coloring of a commutative rings, \em J. Algebra} {\bf 116} (1988) 208--226.
		
		
		
		
		\bibitem{brouwer-haemers}
		A.~E.~Brouwer and W.~H.~Haemers,
		\newblock {\em Spectra of Graphs},
		\newblock Springer, New York, 2012.
		
		\bibitem{cds} D. M. Cvetkovi\'{c}, P. Rowlison and S. Simi\'c, \textit{An Introduction to Theory of Graph spectra, Spectra of graphs, Theory and application}, London Math. S. Student Text, 75, Cambridge University Press, Inc. UK, 2010.
		\bibitem{filipovski} S. Filipovski and R. Jajcay, Bounds for the energy of graphs, \textit{Math.} \textbf{9}(14) (2021) 1687.
		
		\bibitem{gutman2010} I. Gutman, Hyperenergetic and Hypoenergetic Graphs, Zbornik Radova. In Selected Topics on Applications of Graph Spectra; Matematicki Institut SANU: Beograd, Serbia, 2011, pp. 113--135.
		\bibitem{harary-schwenk}
		F.~Harary and A.~J.~Schwenk,
		\newblock Which graphs have integral spectra?
		\newblock in: {\em Graphs and Combinatorics} (R.~A.~Bari and F.~Harary, eds.),
		\newblock Springer, Berlin, 1974, pp.~45--51.
		\bibitem{fareeha} F. Jamal and M. Imran, Distance spectrum of some zero-divisor graphs, \emph{Aims Math.} \textbf{9}(9) (2024) 23979--23996.
		\bibitem{johnson} C. Johnson and  R. Sankar, Graph energy and topological descriptors of zero-divisor graph associated with the commutative ring, \emph{J. Appl. Math. Comp.} \textbf{69} (2023) 2641--2656.
		\bibitem{klotz-sander}
		W.~Klotz and T.~Sander,
		\newblock Some properties of graphs with integral spectrum,
		\newblock {\em Electron. J. Combin.} {\bf 19} (2012), \#P20.
		\bibitem{shi} X. Li, Y. Shi and I. Gutman, \textit{Graph Energy}, Springer, New York (2012).
		\bibitem{LinXueShu2018}
		H.~Lin, J.~Xue and J.~Shu,
		\newblock On the $A_{\alpha}$-spectra of graphs,
		\newblock {\em Linear Algebra and its Applications} {\bf 550} (2018), 105--120.
		
		\bibitem{monius} K. Mönius, Eigenvalues of zero-divisor graphs of finite commutative rings, \textit{J. Algebra Comb.} \textbf{54} (2021) 787--802.
		
		\bibitem{nv} V. Nikiforov, Beyond graph energy: Norms of graphs and matrices, \emph{Linear Algebra Appl.} \textbf{506} (2016) 82--138.
		\bibitem{nikiforov-aalpha}
		V.~Nikiforov,
		\newblock Merging the $A$– and $Q$–spectral theories,
		\newblock {\em Appl. Anal. Discrete Math.} {\bf 11} (2017), 81--107.
		\bibitem{NikiforovTrees}
		V.~Nikiforov, R.~Rojo, K.~Rooney and I.~Sciriha,
		On the $A_\alpha$-spectra of trees,
		\emph{Linear Algebra Appl.} \textbf{520} (2017), 286--305.
		
		\bibitem{bilalcoo} S. Pirzada,  Bilal A. Rather, R. U. Shaban and T. A. Chishti, Signless Laplacian eigenvalues of the zero-divisor graph associated to finite commutative ring $ \mathbb{Z}_{p^{M_{1}} q^{M_{2}}} $, \emph{Comm. Comb. Optim.} \textbf{8}(3) (2023) 561--574.
		\bibitem{bilalakcej} S. Pirzada,   Bilal A. Rather, H. A. Ganie and R. U. Shaban, On $ \alpha $-adjacency energy of graphs,  \emph{AKCE Int. J. Graphs Comb.} (2021) \textbf{18}(1) 2020 39--46.
		\bibitem{bilalejgta} S. Pirzada,  Bilal A. Rather, T. A. Chishti and U. Samme, On normalized Laplacian spectrum of zero-divisor graphs of commutative ring  $ \mathbb{Z}_{n} $, \emph{Elec. J. Graph Theory  Appl.} \textbf{9}(2) (2021) 331--345.
		\bibitem{bilalaims}	Bilal A. Rather, F. Ali, N. Ullah, Al-S. Mohammad, A. Din and  Sehra, $ A_{\alpha} $ matrix of commuting graphs of non-abelian groups, \textit{Aims Math.} \textbf{7}(8) (2022) 15436--15452.
		\bibitem{bilaljaa}	  Bilal A. Rather, H. A. Ganie and S. Pirzada, On the $ A_{\alpha} $--spectrum of joined union and its applications to power graphs of certain finite groups,  \emph{J. Algebra  Appl.} \textbf{22}(12) (2023) 2350257.
		\bibitem{bilalDlsurvey} Bilal A. Rather and Mustapha Aouchiche, Distance Laplacian spectra of graphs: a survey, \emph{Discrete Applied Mathematics} \textbf{361} (2025) 136--195.
		\bibitem{bilalijpam} Bilal A. Rather, M. Aouchiche and M. Imran,  On Laplacian integrability of comaximal graphs of commutative rings, \emph{Indian J. Pure Appl. Math.} \textbf{55}  (2024) 310--324. 
	
		\bibitem{so-integral}
		W.~So,
		\newblock Integral circulant graphs,
		\newblock {\em Discrete Math.} {\bf 306} (2006), 153--158.
		
		\bibitem{my} M. Young, Adjacency matrices of zero-divisor graphs of integer modulo $ n $, \emph {Involve} {\bf 8} (2015) 753--761.
		
		
		
		
	\end{thebibliography}
\end{document}